\newtheorem{theorem}{Theorem}[section]
\newtheorem{lemma}[theorem]{Lemma}
\newtheorem{prop}[theorem]{Proposition}
\newtheorem{cor}[theorem]{Corollary}
\DeclareMathOperator{\im}{im}
\DeclareMathOperator{\sfl}{sf}
\DeclareMathOperator{\diag}{diag}
\title{The Equivariant Spectral Flow and Bifurcation of Periodic Solutions of Hamiltonian Systems}
\author{Marek Izydorek, Joanna Janczewska and Nils Waterstraat}
\begin{document}
\date{}
\maketitle

\footnotetext[1]{{\bf 2010 Mathematics Subject Classification: Primary 58E09; Secondary 58J30, 58E07, 34C25}}

\begin{abstract}
We define a spectral flow for paths of selfadjoint Fredholm operators that are equivariant under the orthogonal action of a compact Lie group as an element of the representation ring of the latter. This $G$-equivariant spectral flow shares all common properties of the integer valued classical spectral flow, and it can be non-trivial even if the classical spectral flow vanishes. Our main theorem uses the $G$-equivariant spectral flow to study bifurcation of periodic solutions for autonomous Hamiltonian systems with symmetries. 
\end{abstract}

\section{Introduction}
The spectral flow is a homotopy invariant for paths of selfadjoint Fredholm operators that was invented by Atiyah, Patodi and Singer in their study of spectral asymmetry and index
theory in \cite{AtiyahPatodi}. Selfadjoint Fredholm operators are either invertible or $0$ is an isolated eigenvalue of finite multiplicity. Roughly speaking, if $\mathcal{A}=\{\mathcal{A}_\lambda\}_{\lambda\in[0,1]}$ is a path of selfadjoint Fredholm
operators, then the spectral flow of $\mathcal{A}$ is the net number of eigenvalues of $\mathcal{A}_0$ that become positive whilst the parameter $\lambda$ travels along the unit interval. The spectral flow has been widely used in different communities over the last decades, e.g., in global analysis, mathematical physics, symplectic analysis and bifurcation theory. Though we are particularly interested in the latter two, the first part of this work is dealing with the spectral flow as an abstract mathematical object and thus shall be of general interest.\\
Atiyah, Patodi and Singer's original approach to the spectral flow in \cite{AtiyahPatodi} involves studying the graph of the spectrum of the path after a suitable perturbation and counting intersection numbers with the horizontal axis. Robbin and Salamon showed in \cite{Robbin-Salamon} that these intersection numbers can actually be computed under generic assumptions by signatures of quadratic forms, which yields an independent construction of the spectral flow. A description of the spectral flow without intersection numbers was introduced by Floer in \cite{Floer}, and Phillips gave in \cite{Phillips} a rigorous elaboration of this approach in purely functional analytic terms.\\
Let us now assume for a moment that $H$ is a complex Hilbert space and $G$ a compact Lie group acting unitarily on $H$. We denote by $R(G)$ the complex representation ring of $G$, which consists of all formal differences of isomorphism classes of finite dimensional linear complex representations of $G$. Fang invented in \cite{Fang} an equivariant spectral flow for studying equivariant versions of theorems in global analysis that originally appeared in Atiyah, Patodi and Singer's work \cite{AtiyahPatodi}. In a slightly more functional analytic language, this construction can be outlined as follows. If $\mathcal{A}=\{\mathcal{A}_\lambda\}_{\lambda\in [0,1]}$ is a path of unbounded selfadjoint Fredholm operators with compact resolvent on $H$, then the spectrum of each $\mathcal{A}_\lambda$ is discrete and each eigenvalue is of finite multiplicity. If the operators $\mathcal{A}_\lambda$ are $G$-equivariant, then for each $\mu\in\sigma(\mathcal{A}_\lambda)$, the eigenspace $E(\mathcal{A}_\lambda,\mu)$ is invariant under $G$ and consequently a finite dimensional complex representation of $G$. Fang showed that there are at most countable many continuous functions $f_j$ and elements $R_j\in R(G)$ such that

\[\{(\mu,E(\mathcal{A}_\lambda,\mu)):\, \mu\in\sigma(\mathcal{A}_\lambda)\}=\bigcup_{j\in\mathbb{N}}{\{(f_j(\lambda),R_j)\}}\subset \mathbb{R}\times R(G),\quad \lambda\in I.\]
He defined the equivariant spectral flow of the path $\mathcal{A}$ by

\begin{align}\label{Fang}
\sfl_G(\mathcal{A})=\sum_{j\in\mathbb{N}}{\varepsilon(f_j)R_j}\in R(G),
\end{align}
where $\varepsilon(f_j)$ is the intersection number of the graph of $f_j$ with the line $\lambda=-\delta$ for a sufficiently small $\delta>0$. As only finitely many of the intersection numbers are non-zero, the right hand side in the definition is actually a finite sum. If $G$ is the trivial group, then $R(G)\cong\mathbb{Z}$ and \eqref{Fang} can be identified with the sum of the intersection numbers. Then \eqref{Fang} is the classical definition of the spectral flow of Atiyah, Patodi and Singer \cite{AtiyahPatodi}, who also were only dealing with operators having compact resolvents as this is a common setting when studying elliptic operators on closed manifolds.\\
The first aim of this paper is to define the $G$-equivariant spectral flow as element of the representation ring in purely functional analytic terms as in Phillips approach \cite{Phillips} in the classical case. We show that all common properties of the spectral flow carry over to this setting, and point out that the $G$-equivariant spectral flow is a finer invariant than the classical one, i.e., a trivial $G$-equivariant spectral flow implies that the classical spectral flow is trivial as well. We underpin this latter observation by a simple example of a class of paths of $\mathbb{Z}_2$-equivariant selfadjoint Fredholm operators having a vanishing spectral flow but a non-trivial $\mathbb{Z}_2$-equivariant spectral flow.\\
The second aim of this paper concerns bifurcation theory for critical points of one-parameter families of functionals. The Hessians of such functionals are selfadjoint operators and they play a crucial role in finding bifurcation points. Actually, under the additional assumption that the operators are Fredholm, a non-trivial kernel is a necessary assumption for the existence of a bifurcation. It is well known that a jump in the Morse index of the Hessians causes a bifurcation if the functionals are essentially positive, i.e. their Morse indices are finite (cf. \cite{Mawhin}). However, there are important types of differential equations whose solutions are critical points of a functional on a Hilbert space but where the Hessians fail to have finite Morse indices. The most prominent class of this type are Hamiltonian systems and numerous works have appeared over the last decades where approaches were developed to deal with their bifurcation problems (cf., e.g., \cite{Mawhin}, \cite{Szulkin}, \cite{BartschSzulkin}, \cite{SFLPejsachowiczII}). We want to emphasize that the Hessians are selfadjoint Fredholm operators in this case and the latter reference uses the main theorem of \cite{SFLPejsachowiczI}, which shows the existence of a bifurcation of critical points if the spectral flow of the path of Hessians is non-trivial.\\ 
Smoller and Wasserman considered in \cite{SmollerWasserman} essentially positive functionals that are invariant under the orthogonal action of a compact Lie group. Note that the Hessians of such functionals are $G$-equivariant. They introduced a bifurcation invariant in terms of representations of $G$ and applied it to symmetry breaking bifurcation for semilinear elliptic equations. We firstly show below that, for essentially positive functionals, Smoller and Wasserman's bifurcation invariant actually is our $G$-equivariant spectral flow of the path of Hessians. Secondly, we use the latter observation and the fact that the $G$-equivariant spectral flow is defined for any path of $G$-equivariant selfadjoint Fredholm operators, to open up the methods from \cite{SmollerWasserman} to strongly indefinite functionals. Indeed, in our first main theorem, we compute the $G$-equivariant spectral flow of paths of $G$-equivariant autonomous Hamiltonian systems in terms of representations of the group $G$. Secondly, we show that a non-trivial $G$-equivariant spectral flow yields bifurcation in this setting. If the group action is trivial, i.e., when the $G$-equivariant spectral flow is just the ordinary spectral flow, we obtain as corollary the main theorem of \cite{SFLPejsachowiczII}.\\  
Rybicki and his group have developed a degree theory for strongly indefinite $G$-invariant functionals that can show the existence of (global) bifurcation points for some kinds of equations (see, e.g., \cite{GawryRy} and \cite{GoleRy}). Rybicki's bifurcation invariant is an element of the tom-Dieck ring $U(G)$, which is made of equivalence classes of $G$-homotopy types of finite $G$-$CW$-complexes. The $G$-equivariant spectral flow is different from Rybicki's bifurcation invariant and it seems to us more comprehensive and easier to compute, which we demonstrate by our main theorems.\\
We conclude the introduction by pointing out a possible application of our $G$-equivariant spectral flow in global analysis. A non-vanishing (classical) spectral flow implies the existence of non-trivial kernels in a path of selfadjoint Fredholm operators. B\"ar showed in his celebrated work \cite{BaerHarmonic} the existence of harmonic spinors on closed spin manifolds of dimension $n\equiv 3\mod 4$ by constructing paths of Dirac operators having non-trivial spectral flows (see also \cite{BaerSpinors},\cite{Spinors}). This argument does not carry over to the remaining dimensions as in these cases symmetries of the Dirac operators force the spectral flow to vanish \cite[\S 8]{BaerHarmonic}. We hope that the equivariant spectral flow can contribute to this open problem as its non-triviality implies the existence of non-trivial kernels even if the classical spectral flow vanishes.\\     
Henceforth, as we are mainly interested in bifurcation theory, we consider only real Hilbert spaces and thus the $G$-equivariant spectral flow is an element of the real representation ring $RO(G)$ of the acting Lie group $G$. All of our arguments carry over mutatis mutandis to complex Hilbert spaces, where the $G$-equivariant spectral flow is an element of the complex representation ring $R(G)$. Moreover, if we consider paths of unbounded operators, we always assume a common domain. The interested reader will have no difficulties to work out the obvious modifications for paths of $G$-equivariant unbounded selfadjoint Fredholm operators that are continuous in the gap-topology as in \cite{UnbSpecFlow}.

%%%%%%%%%%%%%%%%%%%%%%%%%%%%%%%%%%%%%%%%%%%%%%%%%%%%%%%%%%%%%%%%%%%%%%%%%%%%%%%%%%%%%%%%%%%%%%%%%%%%%%%%%%%%%%%%%%%%%%%%%%%%%%%%%%%%%%%%%%%%%%%%%%%%%%%%%%%%%%%%%%%%%%%%%%%%%%%%%%%%%%%%%%%%%%%%%%%%%%%%%%%%%%%%%%%%%%%%%%%%%%%%%%%%%%%%%%%%%%%%%%%%%%%%%%%%%%%%%%%%%%%%%%%%%%

\section{The $G$-equivariant Spectral Flow}
 
\subsection{Recap: The Spectral Flow}
The aim of this section is to recall the construction of the classical spectral flow, where we follow \cite{Phillips} and use the terminology from \cite{CalcVar}.\\
Let $W$ and $H$ be real separable Hilbert spaces with a dense embedding $\iota:W\hookrightarrow H$. We denote by $\mathcal{L}(W,H)$ the space of all linear bounded operators with the operator norm. As $W\subset H$ is dense, it makes sense to define $\mathcal{S}(W,H)$ as the set of all operators in $\mathcal{L}(W,H)$ which are selfadjoint when considered as operators on $H$ having the dense domain $W$. Note that operators in $\mathcal{S}(W,H)$ are bounded as operators between the Hilbert spaces $W$ and $H$. Henceforth, we consider $\mathcal{S}(W,H)$ as metric space with respect to the metric inherited from $\mathcal{L}(W,H)$. Finally, we denote by $\mathcal{FS}(W,H)$ the Fredholm operators in $\mathcal{S}(W,H)$, i.e., the operators $T\in\mathcal{S}(W,H)$ such that $\ker(T)$ is of finite dimension and $\im(T)$ is closed.\\ 
The spectrum $\sigma(T)$ of $T\in\mathcal{FS}(W,H)$ is the (generally non-disjoint) union of the point spectrum $\sigma_p(T)$ and the essential spectrum $\sigma_{ess}(T)$. If $0\in\sigma(T)$, then it is an isolated point of $\sigma(T)$ and an eigenvalue of finite multiplicity (see, e.g., \cite[Lemma 13]{Fredholm}). We denote for $a,b\notin\sigma(T)$ by $\chi_{[a,b]}(T)$ the spectral projection of $T$ with respect to the interval $[a,b]$. Note that if $\sigma_{ess}(T)\cap{[a,b]}=\emptyset$, then $\chi_{[a,b]}(T)$ is the orthogonal projection onto the direct sum of the eigenspaces for eigenvalues in $[a,b]$. In this case $\chi_{[a,b]}(T)$ is of finite rank as there are only isolated eigenvalues of finite multiplicity in $[a,b]$. The following lemma is a well known result about the stability of spectra (cf. \cite[I.II]{Gohberg}).

\begin{lemma}\label{lemma-sflconstruction}
Let $T_0\in\mathcal{FS}(W,H)$ and $a>0$ such that $\pm a\notin\sigma(T_0)$ and $[-a,a]\cap\sigma_{ess}(T_0)=\emptyset$. Then there is an open neighbourhood $N_{T_0,a}$ of $T_0$ in $\mathcal{FS}(W,H)$ such that $\pm a\notin\sigma(T)$ and $[-a,a]\cap\sigma_{ess}(T)=\emptyset$ for all $T\in N_{T_0,a}$. 
\end{lemma}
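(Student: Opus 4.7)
My plan is to handle the two assertions $\pm a\notin\sigma(T)$ and $[-a,a]\cap\sigma_{ess}(T)=\emptyset$ essentially in parallel, using the openness of the invertible operators in $\mathcal{L}(W,H)$ for the first and a Riesz spectral projection argument for the second. Both rely on the same elementary tool: a Neumann-series perturbation of the resolvent.

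\medskip

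\textbf{Step 1: Invertibility at $\pm a$.} The hypothesis $\pm a\notin\sigma(T_0)$ means that $T_0\pm aI\in\mathcal{L}(W,H)$ (bounded since $\iota:W\hookrightarrow H$ is continuous) are bijections, so their inverses lie in $\mathcal{L}(H,W)$. Since the invertible elements form an open subset of $\mathcal{L}(W,H)$ (as bounded perturbations of invertibles stay invertible by a Neumann-series expansion), I can find a first neighbourhood $U_1$ of $T_0$ on which $T\pm aI$ is still invertible, hence $\pm a\notin\sigma(T)$ for every $T\in U_1$.

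\medskip

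\textbf{Step 2: Controlling the essential spectrum via a Riesz projection.} Because $\sigma(T_0)$ is closed and purely real (by selfadjointness), $[-a,a]\cap\sigma(T_0)$ consists of at most finitely many eigenvalues of finite multiplicity (using that $[-a,a]\cap\sigma_{ess}(T_0)=\emptyset$), and $\pm a\notin\sigma(T_0)$. Therefore I can choose $\delta>0$ small enough that the boundary $\Gamma$ of the rectangle $R=[-a-\delta,a+\delta]+i[-\delta,\delta]$ is disjoint from $\sigma(T_0)$. On the compact set $\Gamma$ the resolvent $z\mapsto(zI-T_0)^{-1}$ is continuous with values in $\mathcal{L}(H,W)$, and hence also continuous into $\mathcal{L}(H,H)$ after composing with $\iota$; in particular it is uniformly bounded, say by $M$. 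Using the identity
\[
(zI-T)^{-1}=(zI-T_0)^{-1}\bigl(I-(T-T_0)(zI-T_0)^{-1}\bigr)^{-1},
\]
valid whenever the inner operator is invertible on $H$, a Neumann-series argument shows that on a second neighbourhood $U_2$ of $T_0$ (determined by $\|T-T_0\|_{\mathcal{L}(W,H)}<1/M$), the resolvent $(zI-T)^{-1}$ exists for all $z\in\Gamma$ and depends continuously on $T$ uniformly in $z\in\Gamma$. Defining
\[
P_T=-\frac{1}{2\pi i}\oint_\Gamma(zI-T)^{-1}\,dz\in\mathcal{L}(H,H),
\]
the map $T\mapsto P_T$ is then continuous on $U_2$.

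\medskip

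\textbf{Step 3: Rank argument and conclusion.} The projection $P_{T_0}$ has finite rank, equal to $\dim\chi_{[-a,a]}(T_0)H$, because $\sigma(T_0)\cap R$ consists of finitely many eigenvalues of finite multiplicity. By the classical fact that two projections with $\|P_T-P_{T_0}\|<1$ have the same rank, shrinking $U_2$ further if necessary yields $\operatorname{rk} P_T=\operatorname{rk} P_{T_0}<\infty$. Since $\sigma(T)\subset\mathbb{R}$ lies inside $\Gamma$ only on the segment $[-a-\delta,a+\delta]$ and $P_T$ is the spectral projection onto this part of the spectrum, $\sigma(T)\cap[-a-\delta,a+\delta]$ consists of finitely many eigenvalues of finite multiplicity, whence $[-a,a]\cap\sigma_{ess}(T)=\emptyset$. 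Taking $N_{T_0,a}=U_1\cap U_2\cap\mathcal{FS}(W,H)$ finishes the proof.

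\medskip

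\textbf{Main obstacle.} The genuine subtlety is the two-topology bookkeeping: the perturbation $T-T_0$ is only small in $\mathcal{L}(W,H)$, while the resolvent must be controlled in $\mathcal{L}(H,H)$ to make the Riesz projection an operator on $H$ whose rank is meaningful. The resolution is to notice that $(zI-T_0)^{-1}\in\mathcal{L}(H,W)$, so the composition $(T-T_0)(zI-T_0)^{-1}$ is a bounded operator $H\to H$ with norm at most $\|T-T_0\|_{\mathcal{L}(W,H)}\,\|(zI-T_0)^{-1}\|_{\mathcal{L}(H,W)}$, which is exactly the quantity that the Neumann series needs to be small. Once this is in place, everything else is a standard spectral stability argument.
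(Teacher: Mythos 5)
The paper does not actually prove this lemma --- it is quoted as a known stability result with a reference to \cite[I.II]{Gohberg} --- so there is no in-paper argument to compare against; your proof is essentially the standard one from that reference, and it is correct. The key point you identify as the ``main obstacle'' is indeed the right one: $T-T_0$ is only small in $\mathcal{L}(W,H)$, and the factorisation $(zI-T)=\bigl(I-(T-T_0)(zI-T_0)^{-1}\bigr)(zI-T_0)$ works precisely because $(zI-T_0)^{-1}\in\mathcal{L}(H,W)$ by the closed graph theorem, so the perturbing factor is a bounded operator on $H$ of norm at most $\|T-T_0\|_{\mathcal{L}(W,H)}\,\|(zI-T_0)^{-1}\|_{\mathcal{L}(H,W)}$. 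Three small points should be tightened. First, $H$ is a \emph{real} Hilbert space throughout the paper, so the contour integral defining $P_T$ only makes sense after complexifying $W$, $H$ and $T$; since $T$ is selfadjoint the (essential) spectrum is unchanged under complexification and the Riesz projection commutes with conjugation, so the finite-rank conclusion descends to $H$, but this step should be said. Second, you must choose $\delta$ so that the whole segments $[a,a+\delta]$ and $[-a-\delta,-a]$ (not merely the contour $\Gamma$) avoid $\sigma(T_0)$; otherwise the interior of the rectangle could a priori contain essential spectrum of $T_0$ in $(a,a+\delta)$ and $P_{T_0}$ need not have finite rank. This stronger choice is possible because $\sigma(T_0)$ is closed and $\pm a\notin\sigma(T_0)$. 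Third, with the resolvent written as $(zI-T)^{-1}$ and $\Gamma$ positively oriented, the Riesz projection is $+\frac{1}{2\pi i}\oint_\Gamma(zI-T)^{-1}\,dz$; your minus sign is a harmless convention slip. With these repairs the argument is complete: finite rank of $P_T$ forces the part of $\sigma(T)$ enclosed by $\Gamma$ to be a finite set of eigenvalues of finite multiplicity, and Step 1 separately guarantees $\pm a\notin\sigma(T)$, which Step 2 alone cannot, since $\pm a$ lie in the interior of the rectangle.
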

\noindent
Let us note that if $N_{T_0,a}$ denotes a neighbourhood as in the previous lemma, then the map

\[N_{T_0,a}\ni T\mapsto\chi_{[-a,a]}(T)\in\mathcal{L}(H)\]
is continuous and of constant (finite) rank on each connected component of $N_{T_0,a}$.\\
The construction of the spectral flow of a path $\mathcal{A}=\{\mathcal{A}_\lambda\}_{\lambda\in I}$ is now as follows, where we denote by $I$ the unit interval. For every $\lambda\in I$ there is an open neighbourhood $N_{\lambda,a}\subset\mathcal{FS}(W,H)$ of $\mathcal{A}_\lambda$ as in the previous lemma. The preimages of these neighbourhoods define an open cover of the compact interval $I$. Consequently, there is a partition of the interval $0=\lambda_0<\lambda_1<\ldots<\lambda_N=1$ and numbers $a_i>0$ such that 

\[\pm a_i\notin\sigma(\mathcal{A}_\lambda),\quad [-a_i,a_i]\cap\sigma_{ess}(\mathcal{A}_\lambda)=\emptyset,\qquad \lambda\in[\lambda_{i-1},\lambda_i].\] 
In particular, the maps

\[[\lambda_{i-1},\lambda_i]\ni \lambda\mapsto\chi_{[-a_i,a_i]}(\mathcal{A}_\lambda)\in\mathcal{L}(H)\]
are continuous for $i=1,\ldots,N$. The \textit{Spectral Flow} of the path $\mathcal{A}$ is the integer

\begin{align}\label{sfl}
\sfl(\mathcal{A})=\sum^N_{i=1}{(\dim E(\mathcal{A}_{\lambda_i},[0,a_i])-\dim E(\mathcal{A}_{\lambda_{i-1}},[0,a_i]))},
\end{align}  
where we denote by $E(\mathcal{A}_\lambda,[a,b])$ for $[a,b]\cap\sigma_{ess}(\mathcal{A}_\lambda)=\emptyset$ the direct sum of the eigenspaces for eigenvalues in $[a,b]$.\\ 
It was shown in \cite{Phillips} that this definition neither depends on the choice of the partition $0=\lambda_0<\lambda_1<\ldots<\lambda_N=1$ of the unit interval nor on the numbers $a_i>0$, $i=1,\ldots,N$. Moreover, the spectral flow can be uniquely characterised by some of its properties (see, e.g., \cite{Lesch}, \cite{JacoboUniqueness}, \cite{CompSfl}), and among them is its quite remarkable homotopy invariance. We do not recall here any of these properties, as most of them will follow as special cases of our equivariant spectral flow, which we discuss in the next section.

%%%%%%%%%%%%%%%%%%%%%%%%%%%%%%%%%%%%%%%%%%%%%%%%%%%%%%%%%%%%%%%%%%%%%%%%%%%%%%%%%%%%%%%%%%%%%%%%%%%%%%%%%%%%%%%%%%%%%%%%%%%%%%%%%%%%%%%%%%%%%%%%%%%%%%%%%%%%%%%%%%%%%%%%%%%%%%%%%%%%%%%%%%%%%%%%%%%%%%%%%%%%%%%%%%%%%%%%%%%%%%%%%%%%%%%%%%%%%%%%%%%%%%%%%%%%%%%%%%%%%%%%%%%%%%

\subsection{The $G$-equivariant Spectral Flow: Definition}
Let $G$ be a compact Lie group. We denote by $RO(G)$ the real representation ring of $G$, i.e., the Grothendieck group of all isomorphism classes of finite dimensional linear real representations of $G$ (see \cite{Segal}). The elements of $RO(G)$ are formal differences $[U]-[V]$ of isomorphism classes of $G$-representations modulo the equivalence relation generated by $[U]-[V]\sim [U\oplus W]-[V\oplus W]$. The neutral element in $RO(G)$ is $[V]-[V]$ for any $G$-representation $V$ and the inverse element of $[U]-[V]$ is $[V]-[U]$. Henceforth we use the common name representation ring even though we will never use the ring structure of $RO(G)$ and consider it merely as an abelian group.\\ 
Let now $H$ be a real separable Hilbert space on which $G$ acts orthogonally and such that $W\subset H$ is invariant under this action. We consider paths $\mathcal{A}=\{\mathcal{A}_\lambda\}_{\lambda\in I}$ of selfadjoint Fredholm operators in $\mathcal{FS}(W,H)$ that are $G$-equivariant, i.e., $\mathcal{A}_\lambda(gu)=g(\mathcal{A}_\lambda u)$ for all $u\in W$ and $g\in G$. As in \eqref{sfl}, we let $0=\lambda_0<\ldots< \lambda_N=1$ be a partition of the unit interval and $a_i>0$, $i=1,\ldots N$, such that $[\lambda_{i-1},\lambda_i]\ni \lambda\mapsto\chi_{[-a_i,a_i]}(\mathcal{A}_\lambda)\in\mathcal{L}(H)$ are continuous families of finite rank projections. Then, for $i=1,\ldots,N$, the spaces $E(\mathcal{A}_\lambda,[0,a_i])$, $\lambda_{i-1}\leq \lambda\leq \lambda_i$, in \eqref{sfl} are finite dimensional real $G$-representations. We define the \textit{$G$-equivariant spectral flow} of $\mathcal{A}$ by   

\begin{align}\label{sfl-equiv}
\sfl_G(\mathcal{A})=\sum^N_{i=1}{([E(\mathcal{A}_{\lambda_i},[0,a_i])]-[E(\mathcal{A}_{\lambda_{i-1}},[0,a_i])])}\in RO(G).
\end{align} 
Note that if $G$ is trivial, then representations are isomorphic if and only if they are of the same dimension. Hence $RO(G)\cong\mathbb{Z}$ in this case and so \eqref{sfl-equiv} can be identified with the ordinary spectral flow \eqref{sfl}. To show that \eqref{sfl-equiv} is well defined, we need the following lemma that will be used below several times.

\begin{lemma}\label{lemma-technical}
Let $[c,d]\subset[0,1]$ and $[a,b]\subset\mathbb{R}$ such that $a,b\notin\sigma(\mathcal{A}_\lambda)$ and $[a,b]\cap\sigma_{ess}(\mathcal{A}_\lambda)=\emptyset$ for $\lambda\in[c,d]$. Then $E(\mathcal{A}_c,[a,b])$ and $E(\mathcal{A}_d,[a,b])$ are isomorphic $G$-representations.  
\end{lemma}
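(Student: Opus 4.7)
My plan is to construct a $G$-equivariant linear isomorphism $E(\mathcal{A}_c,[a,b])\to E(\mathcal{A}_d,[a,b])$ by transporting the spectral projection $P_\lambda:=\chi_{[a,b]}(\mathcal{A}_\lambda)$ continuously from $\lambda=c$ to $\lambda=d$. Two preliminary observations set the stage. First, each $P_\lambda$ is $G$-equivariant: every $g\in G$ acts as a bounded orthogonal operator on $H$ that preserves $W$ and commutes with $\mathcal{A}_\lambda$, hence commutes with every bounded Borel function of $\mathcal{A}_\lambda$, and in particular with $\chi_{[a,b]}(\mathcal{A}_\lambda)$. Second, as recalled immediately after Lemma \ref{lemma-sflconstruction}, the map $\lambda\mapsto P_\lambda$ is continuous in $\mathcal{L}(H)$ on $[c,d]$ and of constant finite rank, since $[a,b]$ is disjoint from the essential spectrum of $\mathcal{A}_\lambda$ for all $\lambda\in[c,d]$.

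By uniform continuity and compactness, I choose a subdivision $c=s_0<s_1<\cdots<s_k=d$ fine enough that $\|P_{s_j}-P_{s_{j-1}}\|<1$ for every $j=1,\ldots,k$. For each $j$ I introduce the Kato-type intertwiner
\[
Q_j:=P_{s_j}P_{s_{j-1}}+(I-P_{s_j})(I-P_{s_{j-1}})\in\mathcal{L}(H),
\]
and verify directly the two identities
\[
P_{s_j}Q_j=Q_jP_{s_{j-1}}\qquad\text{and}\qquad Q_j^*Q_j=I-(P_{s_j}-P_{s_{j-1}})^2.
\]
The first shows that $Q_j$ intertwines $P_{s_{j-1}}$ with $P_{s_j}$, so that $Q_j(\im P_{s_{j-1}})\subset\im P_{s_j}$; the second, combined with the smallness assumption $\|P_{s_j}-P_{s_{j-1}}\|<1$, shows that $Q_j^*Q_j$ has norm-distance less than $1$ from $I$ and is therefore invertible, whence $Q_j$ itself is invertible on $H$. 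Because $Q_j$ is assembled entirely from $G$-equivariant operators, it is itself $G$-equivariant.

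It then follows that $Q_j$ restricts to a $G$-equivariant linear isomorphism
$E(\mathcal{A}_{s_{j-1}},[a,b])\to E(\mathcal{A}_{s_j},[a,b])$, and composing these $k$ isomorphisms produces the required $G$-equivariant isomorphism $E(\mathcal{A}_c,[a,b])\cong E(\mathcal{A}_d,[a,b])$. The only step involving genuine computation is the algebraic identity $Q_j^*Q_j=I-(P_{s_j}-P_{s_{j-1}})^2$, a standard manipulation using $P^2=P=P^*$ for orthogonal projections; I do not foresee any substantive obstacle, as $G$-equivariance is preserved at every stage of the construction and the topology of $\mathcal{FS}(W,H)$ plays no further role once the subdivision has been fixed.
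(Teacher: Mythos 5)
Your proposal is correct and follows essentially the same route as the paper: subdivide $[c,d]$ so that consecutive spectral projections differ in norm by less than $1$, and use the Kato-type intertwiner $PQ+(I-P)(I-Q)$ together with the identity reducing its composition with its adjoint-type partner to $I-(P-Q)^2$ to obtain a $G$-equivariant isomorphism between the images. The only hair-splitting point is that $Q_j^*Q_j$ invertible by itself gives only left-invertibility of $Q_j$; surjectivity onto $E(\mathcal{A}_{s_j},[a,b])$ then follows from the constant finite rank of the projections (or from the companion identity $Q_jQ_j^*=I-(P_{s_j}-P_{s_{j-1}})^2$), exactly as the paper invokes equality of the ranks.
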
 

\begin{proof}
The map $[c,d]\ni\lambda\mapsto\chi_{[a,b]}(\mathcal{A}_{\lambda})\in\mathcal{L}(H)$ is continuous under the given assumptions. Hence there is a partition $c=\lambda_0\leq \lambda_1\leq\ldots\leq \lambda_k=d$ such that $\|\chi_{[a,b]}(\mathcal{A}_{\lambda_i})-\chi_{[a,b]}(\mathcal{A}_{\lambda_{i-1}})\|<1$. Moreover, these projections are $G$-equivariant as their images are invariant and $G$ acts orthogonally. Consequently, all we need to show is that if $Q$ and $P$ are $G$-equivariant finite rank projections such that $\|P-Q\|<1$, then $\im(P)$ and $\im(Q)$ are isomorphic as $G$-representations. We first note that $\im(P)$ and $\im(Q)$ are of the same finite dimension (cf. \cite[Lem. I.4.3]{Gohberg}). The $G$-equivariant map $U:=PQ+(I_H-P)(I_H-Q)$ maps $\im(P)$ into $\im(Q)$. A direct computation shows that

\[(QP+(I_H-Q)(I_H-P))U=I_H-(P-Q)^2.\]
As $\|P-Q\|<1$, the right hand side is an isomorphism which shows that $U$ is injective. Thus $U\mid_{\im(P)}:\im(P)\rightarrow\im(Q)$ is a $G$-equivariant isomorphism. 
\end{proof}
\noindent
A first application of the previous lemma is the well-definedness of \eqref{sfl-equiv}.

\begin{lemma}
The $G$-equivariant spectral flow is well defined, i.e., \eqref{sfl-equiv} does not depend on the choice of the partition $0=\lambda_0< \lambda_1<\ldots< \lambda_N=1$ of $I$ and the numbers $a_i>0$.
\end{lemma}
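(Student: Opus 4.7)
The plan is to show that any two admissible choices $(\{\lambda_i\}_{i=0}^N, \{a_i\}_{i=1}^N)$ and $(\{\mu_j\}_{j=0}^M, \{b_j\}_{j=1}^M)$ produce the same element of $RO(G)$. I would do this by isolating two elementary operations on admissible data, each of which leaves \eqref{sfl-equiv} invariant, and then observing that any two admissible choices differ by a finite sequence of such operations.

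The first operation is \emph{refinement of the partition}: inserting an additional point $\lambda' \in (\lambda_{i-1},\lambda_i)$ and reusing $a_i$ on both new subintervals. The $i$-th summand of \eqref{sfl-equiv} is then replaced by
\[
([E(\mathcal{A}_{\lambda_i},[0,a_i])] - [E(\mathcal{A}_{\lambda'},[0,a_i])]) + ([E(\mathcal{A}_{\lambda'},[0,a_i])] - [E(\mathcal{A}_{\lambda_{i-1}},[0,a_i])]),
\]
which telescopes back to the original summand; no use of the $G$-structure is required here.

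The second operation is \emph{replacement of the cut-off}: on a fixed subinterval $[\lambda_{i-1},\lambda_i]$, substituting $a_i$ by another $a'_i > 0$ for which $([\lambda_{i-1},\lambda_i], a'_i)$ is still admissible. Assuming $a_i < a'_i$ (the other case being symmetric), the gap $a_i \notin \sigma(\mathcal{A}_\lambda)$ on $[\lambda_{i-1},\lambda_i]$ yields the $G$-invariant orthogonal decomposition
\[
E(\mathcal{A}_\lambda,[0,a'_i]) = E(\mathcal{A}_\lambda,[0,a_i]) \oplus E(\mathcal{A}_\lambda,[a_i,a'_i]),
\]
so the $i$-th summand changes by $[E(\mathcal{A}_{\lambda_i},[a_i,a'_i])] - [E(\mathcal{A}_{\lambda_{i-1}},[a_i,a'_i])]$. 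Since $a_i, a'_i \notin \sigma(\mathcal{A}_\lambda)$ and $[a_i, a'_i] \subset [-a'_i, a'_i]$ avoids $\sigma_{ess}(\mathcal{A}_\lambda)$ throughout $[\lambda_{i-1}, \lambda_i]$, Lemma \ref{lemma-technical} identifies these two $G$-representations, so the difference vanishes in $RO(G)$. This is the only place where the equivariance genuinely enters the argument.

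To finish, I would pass to the common refinement of the two given partitions (via the first operation applied repeatedly to each side), expressing both sums over a single partition $\{\nu_k\}$ with two inherited cut-off assignments $a_{i(k)}$ and $b_{j(k)}$ on each subinterval. The second operation, applied subinterval by subinterval, then converts one assignment into the other without changing the value of \eqref{sfl-equiv}. The only delicate point is the verification of the hypotheses of Lemma \ref{lemma-technical} in the second operation, namely that both endpoints $a_i$ and $a'_i$ avoid $\sigma(\mathcal{A}_\lambda)$ uniformly for $\lambda \in [\lambda_{i-1}, \lambda_i]$; but this is exactly what admissibility of the two cut-offs provides, so the argument reduces to routine bookkeeping.
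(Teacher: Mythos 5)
Your proposal is correct and follows essentially the same three-step argument as the paper: invariance under refinement of the partition by telescoping, invariance under change of the cut-off via the orthogonal decomposition $E(\mathcal{A}_\lambda,[0,a'_i])=E(\mathcal{A}_\lambda,[0,a_i])\oplus E(\mathcal{A}_\lambda,[a_i,a'_i])$ together with Lemma \ref{lemma-technical}, and finally passing to the common refinement of two partitions. The verification of the hypotheses of Lemma \ref{lemma-technical} on $[a_i,a'_i]$ is carried out correctly, so nothing is missing.
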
 

\begin{proof}
Our argument follows \cite{Phillips} and consists of three steps. Firstly, we add a $\lambda_\ast$ to the partition $0=\lambda_0< \lambda_1<\ldots< \lambda_N=1$, say $\lambda_i<\lambda_\ast<\lambda_{i+1}$. Then the only amendment in \eqref{sfl-equiv} is that 

\[[E(\mathcal{A}_{\lambda_i},[0,a_i])]-[E(\mathcal{A}_{\lambda_{i-1}},[0,a_i])]\]
is replaced by

\begin{align*}
([E(\mathcal{A}_{\lambda_i},[0,a_i])]-[E(\mathcal{A}_{\lambda_\ast},[0,a_i])])+([E(\mathcal{A}_{\lambda_\ast},[0,a_i])]-[E(\mathcal{A}_{\lambda_{i-1}},[0,a_i])]).
\end{align*}
As

\[[E(\mathcal{A}_{\lambda_\ast},[0,a_i])]-[E(\mathcal{A}_{\lambda_\ast},[0,a_i])]=0\in RO(G),\]
this does not affect \eqref{sfl-equiv}.\\
Secondly, let us consider the case that we have $b_i\neq a_i$ for some $i$ such that $\pm b_i\notin\sigma(\mathcal{A}_\lambda)$ and $[-b_i,b_i]\cap\sigma_{ess}(\mathcal{A}_\lambda)=\emptyset$ for $\lambda\in[\lambda_{i-1},\lambda_i]$. We assume without loss of generality that $b_i>a_i$. As $E(\mathcal{A}_{\lambda_i},[0,a_i])$ and $E(\mathcal{A}_{\lambda_i},[a_i,b_i])$ are $G$-invariant and intersect trivially, we obtain

\begin{align*}
&[E(\mathcal{A}_{\lambda_i},[0,b_i])]-[E(\mathcal{A}_{\lambda_{i-1}},[0,b_i])]\\
&=([E(\mathcal{A}_{\lambda_i},[0,a_i])\oplus E(\mathcal{A}_{\lambda_i},[a_i,b_i])])-([E(\mathcal{A}_{\lambda_{i-1}},[0,a_i])\oplus E(\mathcal{A}_{\lambda_{i-1}},[a_i,b_i])])\\
&=([E(\mathcal{A}_{\lambda_i},[0,a_i])]-[E(\mathcal{A}_{\lambda_{i-1}},[0,a_i])])+([E(\mathcal{A}_{\lambda_i},[a_i,b_i])]-[E(\mathcal{A}_{\lambda_{i-1}},[a_i,b_i])])\\
&=[E(\mathcal{A}_{\lambda_i},[0,a_i])]-[E(\mathcal{A}_{\lambda_{i-1}},[0,a_i])],
\end{align*} 
where the last equality follows from Lemma \ref{lemma-technical}. Thus \eqref{sfl-equiv} does not depend on the choice of the $a_i$.\\
Finally, if we have two different partitions $0=\lambda_0< \lambda_1<\ldots< \lambda_N=1$ and $0=\eta_0< \eta_1<\ldots< \eta_M=1$ with corresponding numbers $a_i$, $i=1,\ldots, N$ and $b_i$, $i=1,\ldots, M$, we merge the two partitions to obtain a finer one for which we can use either the $a_i$ or the $b_i$ to compute \eqref{sfl-equiv}. In both cases this does not affect \eqref{sfl-equiv} by the first step of our proof. As \eqref{sfl-equiv} does not depend on the $a_i$ and $b_i$ by the second step, we finally see that we obtain in both cases the same element in $RO(G)$.
\end{proof}
\noindent
We conclude this section by noting that there is a canonical homomorphism 

\[F:RO(G)\rightarrow\mathbb{Z},\quad [U]-[V]\mapsto\dim(U)-\dim(V),\]
and it follows from \eqref{sfl} and \eqref{sfl-equiv} that

\begin{align}\label{forgetfull}
F(\sfl_G(\mathcal{A}))=\sfl(\mathcal{A}).
\end{align}
Consequently, the classical spectral flow of $\mathcal{A}$ has to vanish if $\sfl_G(\mathcal{A})$ is trivial. We will see below in Section \ref{sect-simpEx} a simple example of a path of $\mathbb{Z}_2$-equivariant operators such that $\sfl_G(\mathcal{A})$ is non-trivial even though $\sfl(\mathcal{A})=0$.

%%%%%%%%%%%%%%%%%%%%%%%%%%%%%%%%%%%%%%%%%%%%%%%%%%%%%%%%%%%%%%%%%%%%%%%%%%%%%%%%%%%%%%%%%%%%%%%%%%%%%%%%%%%%%%%%%%%%%%%%%%%%%%%%%%%%%%%%%%%%%%%%%%%%%%%%%%%%%%%%%%%%%%%%%%%%%%%%%%%%%%%%%%%%%%%%%%%%%%%%%%%%%%%%%%%%%%%%%%%%%%%%%%%%%%%%%%%%%%%%%%%%%%%%%%%%%%%%%%%%%%%%%%%%%%

\subsection{The $G$-equivariant Spectral Flow: Properties}

The aim of this section is to show that the $G$-equivariant spectral flow \eqref{sfl-equiv} has the same formal properties than the classical spectral flow \eqref{sfl}. Henceforth we denote by $\mathcal{FS}_G(W,H)$ the subset of $\mathcal{FS}(W,H)$ of all $G$-equivariant operators.

\begin{lemma}\label{sfl-concatenation}
If $\mathcal{A}^1$ and $\mathcal{A}^2$ are two paths in $\mathcal{FS}_G(W,H)$ such that $\mathcal{A}^1_1=\mathcal{A}^2_0$, then

\[\sfl_G(\mathcal{A}^1\ast\mathcal{A}^2)=\sfl_G(\mathcal{A}^1)+\sfl_G(\mathcal{A}^2)\in RO(G),\]
where $\mathcal{A}^1\ast\mathcal{A}^2$ denotes the concatenation of paths in $\mathcal{FS}_G(W,H)$.
\end{lemma}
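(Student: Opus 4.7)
The plan is to use that the $G$-equivariant spectral flow is independent of the choice of partition and of the numbers $a_i$, so we may pick a partition of $I$ for the concatenation that is tailored to the two constituent paths.

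First, I would choose admissible data for each of the two paths separately: a partition $0=\lambda_0<\lambda_1<\ldots<\lambda_N=1$ with numbers $\alpha_i>0$ for $\mathcal{A}^1$, and a partition $0=\eta_0<\eta_1<\ldots<\eta_M=1$ with numbers $\beta_j>0$ for $\mathcal{A}^2$, so that on each subinterval the families $\chi_{[-\alpha_i,\alpha_i]}(\mathcal{A}^1_\lambda)$ and $\chi_{[-\beta_j,\beta_j]}(\mathcal{A}^2_\lambda)$ are continuous of constant finite rank. Using the standard parametrization $(\mathcal{A}^1\ast\mathcal{A}^2)_\mu=\mathcal{A}^1_{2\mu}$ for $\mu\in[0,1/2]$ and $(\mathcal{A}^1\ast\mathcal{A}^2)_\mu=\mathcal{A}^2_{2\mu-1}$ for $\mu\in[1/2,1]$, I would then transport these partitions to $I$ via the rescalings $\lambda\mapsto\lambda/2$ and $\eta\mapsto\eta/2+1/2$, obtaining the finer partition
\[
0=\tfrac{\lambda_0}{2}<\tfrac{\lambda_1}{2}<\ldots<\tfrac{\lambda_N}{2}=\tfrac{1}{2}=\tfrac{\eta_0}{2}+\tfrac{1}{2}<\ldots<\tfrac{\eta_M}{2}+\tfrac{1}{2}=1
\]
of $I$, with the numbers $\alpha_1,\ldots,\alpha_N,\beta_1,\ldots,\beta_M$. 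Because the hypotheses on continuity of spectral projections are invariant under reparametrization, this choice is admissible for $\mathcal{A}^1\ast\mathcal{A}^2$, and the midpoint $1/2$ appears in the partition so that the two halves decouple.

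Next, I would plug this partition into the definition \eqref{sfl-equiv} of $\sfl_G(\mathcal{A}^1\ast\mathcal{A}^2)$. The sum splits as
\begin{align*}
\sfl_G(\mathcal{A}^1\ast\mathcal{A}^2)&=\sum_{i=1}^N\bigl([E(\mathcal{A}^1_{\lambda_i},[0,\alpha_i])]-[E(\mathcal{A}^1_{\lambda_{i-1}},[0,\alpha_i])]\bigr)\\
&\quad+\sum_{j=1}^M\bigl([E(\mathcal{A}^2_{\eta_j},[0,\beta_j])]-[E(\mathcal{A}^2_{\eta_{j-1}},[0,\beta_j])]\bigr),
\end{align*}
since the values of $\mathcal{A}^1\ast\mathcal{A}^2$ at partition points in $[0,1/2]$ agree with those of $\mathcal{A}^1$ at the $\lambda_i$, and analogously for the second half. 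By the independence result just established, the first sum is $\sfl_G(\mathcal{A}^1)$ and the second is $\sfl_G(\mathcal{A}^2)$, giving the claimed identity in $RO(G)$.

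I do not expect any real obstacle here; the only thing to watch is that the two halves meet consistently at $\mu=1/2$, where both expressions give $\mathcal{A}^1_1=\mathcal{A}^2_0$, so the contribution at the gluing point cancels exactly as in the first step of the well-definedness lemma. Continuity of $\mathcal{A}^1\ast\mathcal{A}^2$ at $\mu=1/2$ is immediate from the matching condition, and then the argument is purely bookkeeping within the Grothendieck group $RO(G)$.
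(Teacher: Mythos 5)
Your argument is correct and is exactly the elaboration of what the paper does: the paper's proof simply states that the lemma is an immediate consequence of the definition \eqref{sfl-equiv}, and your careful bookkeeping (merging the two admissible partitions via the reparametrization, observing that $1/2$ is a partition point so the sum splits, and invoking well-definedness to know the merged data is an admissible choice for the concatenation) is precisely the detail being suppressed. No gaps.
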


\begin{proof}
This is an immediate consequence of the definition \eqref{sfl-equiv}.
\end{proof}
\noindent
The following lemma is the existence property of the spectral flow, which means that a non-trivial spectral flow yields a non-trivial kernel. Here we denote the set of all bounded invertible operators by $GL(W,H)$. 

\begin{lemma}\label{sfl-zero}
If $\mathcal{A}_\lambda\in GL(W,H)\cap\mathcal{FS}_G(W,H)$ for all $\lambda\in I$, then $\sfl_G(\mathcal{A})=0\in RO(G)$.
\end{lemma}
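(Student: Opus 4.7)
The plan is to argue directly from the definition \eqref{sfl-equiv} by exhibiting a partition and choice of constants $a_i$ such that every summand vanishes. The point is that under invertibility, we can locally shrink the spectral window $[-a_i,a_i]$ so that it contains no spectrum at all, not merely no essential spectrum; this forces $E(\mathcal{A}_\lambda,[0,a_i])=\{0\}$ and hence kills each term.

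First, for each fixed $\lambda\in I$, invertibility gives $0\notin\sigma(\mathcal{A}_\lambda)$. Since $\mathcal{A}_\lambda\in\mathcal{FS}(W,H)$, the point $0$ is not in the essential spectrum, and combined with being in the resolvent set, there exists $a_\lambda>0$ with $[-a_\lambda,a_\lambda]\cap\sigma(\mathcal{A}_\lambda)=\emptyset$. In particular $\pm a_\lambda\notin\sigma(\mathcal{A}_\lambda)$ and $[-a_\lambda,a_\lambda]\cap\sigma_{ess}(\mathcal{A}_\lambda)=\emptyset$, so Lemma \ref{lemma-sflconstruction} applies and yields an open neighbourhood $N_{\mathcal{A}_\lambda,a_\lambda}\subset\mathcal{FS}(W,H)$ on which the two conditions persist.

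Next, on each connected component of $N_{\mathcal{A}_\lambda,a_\lambda}$ the map $T\mapsto\chi_{[-a_\lambda,a_\lambda]}(T)$ is continuous and of constant finite rank (as noted after Lemma \ref{lemma-sflconstruction}). Since this rank is zero at $\mathcal{A}_\lambda$, after replacing $N_{\mathcal{A}_\lambda,a_\lambda}$ by the connected component of $\mathcal{A}_\lambda$, every $T$ in this neighbourhood has $\chi_{[-a_\lambda,a_\lambda]}(T)=0$. Combined with $[-a_\lambda,a_\lambda]\cap\sigma_{ess}(T)=\emptyset$, this means $[-a_\lambda,a_\lambda]\cap\sigma(T)=\emptyset$ for all $T$ in the neighbourhood. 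Pulling back along the continuous path $\mathcal{A}$, we obtain an open subset $U_\lambda\subset I$ containing $\lambda$ on which $[-a_\lambda,a_\lambda]\cap\sigma(\mathcal{A}_\mu)=\emptyset$ for every $\mu\in U_\lambda$.

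Compactness of $I$ now provides a finite subcover, which refines to a partition $0=\lambda_0<\lambda_1<\ldots<\lambda_N=1$ and constants $a_i>0$ with $[-a_i,a_i]\cap\sigma(\mathcal{A}_\lambda)=\emptyset$ for every $\lambda\in[\lambda_{i-1},\lambda_i]$. This is an admissible choice in the definition \eqref{sfl-equiv}, and for this choice each $E(\mathcal{A}_{\lambda_i},[0,a_i])=\{0\}=E(\mathcal{A}_{\lambda_{i-1}},[0,a_i])$, so every summand equals $[0]-[0]=0\in RO(G)$. Consequently $\sfl_G(\mathcal{A})=0$. The only subtle point is the passage from ``no essential spectrum and $\pm a_\lambda$ not in the spectrum'' provided by Lemma \ref{lemma-sflconstruction} to ``no spectrum at all in $[-a_\lambda,a_\lambda]$''; this is precisely what the constant-rank property of the spectral projection, applied to the connected component through $\mathcal{A}_\lambda$, is for.
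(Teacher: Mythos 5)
Your proof is correct and rests on the same idea as the paper's: exhibit admissible data for \eqref{sfl-equiv} in which each window $[-a_i,a_i]$ meets $\sigma(\mathcal{A}_\lambda)$ not at all, so that every summand is $[\{0\}]-[\{0\}]=0\in RO(G)$. The only difference is how the spectral gap is produced along the whole path: the paper uses openness of $GL(W,H)$ in $\mathcal{L}(W,H)$ together with compactness of $I$ to obtain a single uniform $\varepsilon>0$ with $\sigma(\mathcal{A}_\lambda)\cap[-\varepsilon,\varepsilon]=\emptyset$ for all $\lambda\in I$, whereas you build local gaps from Lemma \ref{lemma-sflconstruction} and the constant-rank property of $T\mapsto\chi_{[-a_\lambda,a_\lambda]}(T)$ and then pass to a finite subcover with possibly different $a_i$; both are valid, the paper's being a one-line shortcut. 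One small wording fix: the connected component of $N_{\mathcal{A}_\lambda,a_\lambda}$ need not have open preimage in $I$, so take $U_\lambda$ to be an interval contained in $\mathcal{A}^{-1}(N_{\mathcal{A}_\lambda,a_\lambda})$; its image is connected and therefore lies in the component of $\mathcal{A}_\lambda$, on which the rank is $0$.
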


\begin{proof}
As $GL(W,H)\subset\mathcal{L}(W,H)$ is open and the interval $I$ is compact, there is $\varepsilon>0$ such that $\sigma(\mathcal{A}_\lambda)\cap[-\varepsilon,\varepsilon]=\emptyset$ for all $\lambda\in I$. Now the assertion is a consequence of Lemma \ref{lemma-technical} and \eqref{sfl-equiv}.  
\end{proof}
\noindent
The next lemma is important for reducing spectral flow computations to finite dimensions (see, e.g., \cite{SFLPejsachowiczII}, \cite{Edinburgh}).

\begin{lemma}\label{lemma-splitting}
Let $H=H_1\oplus H_2$, where $H_1$, $H_2$ are $G$-invariant and such that $\mathcal{A}_\lambda\mid_{H_i}\in\mathcal{FS}(W_i,H_i)$ for $i=1,2$,  $\lambda\in I$, and suitable $W_i\subset H_i$. Then

\[\sfl_G(\mathcal{A})=\sfl_G(\mathcal{A}\mid_{H_1})+\sfl_G(\mathcal{A}\mid_{H_2})\in RO(G).\] 
\end{lemma}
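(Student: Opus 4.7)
The plan is to exploit the well-definedness of \eqref{sfl-equiv} by choosing a single partition $0=\lambda_0<\ldots<\lambda_N=1$ and a single collection of numbers $a_i>0$ that are admissible for all three paths $\mathcal{A}$, $\mathcal{A}|_{H_1}$ and $\mathcal{A}|_{H_2}$ simultaneously, and then to show that the spectral data split according to the decomposition $H=H_1\oplus H_2$.

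First I would note that the decomposition is in fact orthogonal, since the restriction of a selfadjoint operator to a closed invariant subspace is selfadjoint precisely when the orthogonal complement is also invariant; this is implicit in the hypothesis $\mathcal{A}_\lambda|_{H_i}\in\mathcal{FS}(W_i,H_i)$. Under the resulting orthogonal reduction $\mathcal{A}_\lambda=\mathcal{A}_\lambda|_{H_1}\oplus\mathcal{A}_\lambda|_{H_2}$ one has
\[
\sigma(\mathcal{A}_\lambda)=\sigma(\mathcal{A}_\lambda|_{H_1})\cup\sigma(\mathcal{A}_\lambda|_{H_2}),\qquad \sigma_{ess}(\mathcal{A}_\lambda)=\sigma_{ess}(\mathcal{A}_\lambda|_{H_1})\cup\sigma_{ess}(\mathcal{A}_\lambda|_{H_2}),
\]
so any partition and constants $a_i>0$ admissible for $\mathcal{A}$ in the sense of \eqref{sfl-equiv} are automatically admissible for the two restricted paths as well. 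Hence we may use the same partition and the same $a_i$ to compute all three spectral flows.

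Next I would apply the functional calculus of the reduced operator to obtain, for each such $a_i$,
\[
\chi_{[0,a_i]}(\mathcal{A}_\lambda)=\chi_{[0,a_i]}(\mathcal{A}_\lambda|_{H_1})\oplus\chi_{[0,a_i]}(\mathcal{A}_\lambda|_{H_2}),
\]
and since both $H_1$ and $H_2$ are $G$-invariant, this is a direct sum decomposition of $G$-representations, giving
\[
E(\mathcal{A}_\lambda,[0,a_i])\cong E(\mathcal{A}_\lambda|_{H_1},[0,a_i])\oplus E(\mathcal{A}_\lambda|_{H_2},[0,a_i]).
\]
In particular $[E(\mathcal{A}_\lambda,[0,a_i])]=[E(\mathcal{A}_\lambda|_{H_1},[0,a_i])]+[E(\mathcal{A}_\lambda|_{H_2},[0,a_i])]$ in $RO(G)$.

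Finally, substituting this equality for both endpoints $\lambda=\lambda_i$ and $\lambda=\lambda_{i-1}$ into \eqref{sfl-equiv} and regrouping the finite sum by $\mathbb{Z}$-linearity of $RO(G)$ yields the desired identity $\sfl_G(\mathcal{A})=\sfl_G(\mathcal{A}|_{H_1})+\sfl_G(\mathcal{A}|_{H_2})$. The only non-formal point is the verification that the spectral projection $\chi_{[0,a_i]}(\mathcal{A}_\lambda)$ splits along the orthogonal reduction, which I expect to be the mildest of obstacles; everything else is an elementary bookkeeping argument built on top of the definition and the well-definedness lemma already established.
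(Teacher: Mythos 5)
Your proposal is correct and follows essentially the same route as the paper: the paper's (very terse) proof consists precisely of the observation that $E(\mathcal{A}_\lambda,[a,b])=E(\mathcal{A}_\lambda\mid_{H_1},[a,b])\oplus E(\mathcal{A}_\lambda\mid_{H_2},[a,b])$ as $G$-representations, followed by substitution into \eqref{sfl-equiv}. Your additional remarks on the orthogonality of the decomposition, the inclusion of spectra ensuring a common admissible partition, and the splitting of the spectral projections are exactly the details the paper leaves implicit.
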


\begin{proof}
It follows from the assumptions that $E(\mathcal{A}_\lambda,[a,b])=E(\mathcal{A}_\lambda\mid_{H_1},[a,b])\oplus E(\mathcal{A}_\lambda\mid_{H_2},[a,b])$, where both spaces on the right hand side are $G$-invariant. Using this in \eqref{sfl-equiv} yields the claimed equality for the $G$-equivariant spectral flow.
\end{proof}
\noindent
The following lemma clarifies the dependence of the spectral flow on the orientation of the path.

\begin{lemma}\label{sfl-reverse}
If $\mathcal{A}'_\lambda:=\mathcal{A}_{1-\lambda}$, $\lambda\in I$, for a path $\mathcal{A}$ in $\mathcal{FS}_G(W,H)$, then

\[\sfl_G(\mathcal{A}')=-\sfl_G(\mathcal{A})\in RO(G).\]
\end{lemma}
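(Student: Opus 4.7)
The plan is to compute $\sfl_G(\mathcal{A}')$ directly from the definition \eqref{sfl-equiv}, using a partition and thresholds inherited from those chosen for $\mathcal{A}$, and then to observe that the resulting sum is a term-by-term reversal of the sum defining $\sfl_G(\mathcal{A})$.

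First, I would fix a partition $0=\lambda_0<\lambda_1<\ldots<\lambda_N=1$ and numbers $a_i>0$, $i=1,\ldots,N$, such that $\pm a_i\notin\sigma(\mathcal{A}_\lambda)$ and $[-a_i,a_i]\cap\sigma_{ess}(\mathcal{A}_\lambda)=\emptyset$ for $\lambda\in[\lambda_{i-1},\lambda_i]$, so that \eqref{sfl-equiv} applies to $\mathcal{A}$. Then I would set $\eta_j:=1-\lambda_{N-j}$ and $b_j:=a_{N-j+1}$ for $j=0,1,\ldots,N$, which yields a partition $0=\eta_0<\eta_1<\ldots<\eta_N=1$ with the property that for $\lambda\in[\eta_{j-1},\eta_j]$ the point $1-\lambda$ lies in $[\lambda_{N-j},\lambda_{N-j+1}]$. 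Since $\mathcal{A}'_\lambda=\mathcal{A}_{1-\lambda}$, it follows that $\pm b_j\notin\sigma(\mathcal{A}'_\lambda)$ and $[-b_j,b_j]\cap\sigma_{ess}(\mathcal{A}'_\lambda)=\emptyset$ on $[\eta_{j-1},\eta_j]$, so that this data is admissible for computing $\sfl_G(\mathcal{A}')$ via \eqref{sfl-equiv}, a fact guaranteed by the well-definedness lemma proved above.

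Next, I would substitute into \eqref{sfl-equiv}: since $\mathcal{A}'_{\eta_j}=\mathcal{A}_{\lambda_{N-j}}$, one obtains
\[\sfl_G(\mathcal{A}')=\sum_{j=1}^N\bigl([E(\mathcal{A}_{\lambda_{N-j}},[0,a_{N-j+1}])]-[E(\mathcal{A}_{\lambda_{N-j+1}},[0,a_{N-j+1}])]\bigr).\]
Reindexing by $i=N-j+1$ gives
\[\sfl_G(\mathcal{A}')=\sum_{i=1}^N\bigl([E(\mathcal{A}_{\lambda_{i-1}},[0,a_i])]-[E(\mathcal{A}_{\lambda_i},[0,a_i])]\bigr)=-\sfl_G(\mathcal{A})\]
in $RO(G)$, where the last equality uses that $[U]-[V]$ has inverse $[V]-[U]$ in the representation ring.

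There is essentially no obstacle here: the argument is a direct reindexing and the well-definedness of \eqref{sfl-equiv} handles the dependence on auxiliary choices. The only point to be a little careful about is verifying that the chosen partition of $I$ for $\mathcal{A}'$ together with the numbers $b_j$ really satisfies the spectral admissibility conditions, but this is immediate from the corresponding property for $\mathcal{A}$ via the substitution $\lambda\mapsto 1-\lambda$.
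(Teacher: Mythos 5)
Your proposal is correct and is exactly the argument the paper has in mind: the paper's proof simply states that the claim is an immediate consequence of the definition \eqref{sfl-equiv} together with the fact that $[E(\mathcal{A}_{\lambda_{i-1}},[0,a_i])]-[E(\mathcal{A}_{\lambda_{i}},[0,a_i])]$ is the inverse of $[E(\mathcal{A}_{\lambda_i},[0,a_i])]-[E(\mathcal{A}_{\lambda_{i-1}},[0,a_i])]$ in $RO(G)$. Your write-up merely makes the reversed partition, the carried-over thresholds $b_j=a_{N-j+1}$, and the reindexing explicit, which is a faithful elaboration of the same proof.
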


\begin{proof}
This is an immediate consequence of \eqref{sfl-equiv} and the fact that\linebreak $[E(\mathcal{A}_{\lambda_{i-1}},[0,a_i])]-[E(\mathcal{A}_{\lambda_{i}},[0,a_i])]$ is the inverse of $[E(\mathcal{A}_{\lambda_i},[0,a_i])]-[E(\mathcal{A}_{\lambda_{i-1}},[0,a_i])]$ in $RO(G)$.
\end{proof}
\noindent
Our final aim of this section is to show the homotopy invariance of the $G$-equivariant spectral flow. Here we follow the approach of \cite{CompSfl}, which is based on \cite{Phillips}, and begin by the following proposition.

\begin{prop}\label{prop-hominv}
If $h:I\times I\rightarrow\mathcal{FS}_G(W,H)$ is a homotopy, then

\[\sfl_G(h(0,\cdot))=\sfl_G(h(\cdot,0))+\sfl_G(h(1,\cdot))-\sfl_G(h(\cdot,1))\in RO(G).\]
\end{prop}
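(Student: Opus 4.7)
The plan is to rewrite the claimed identity as
\[
\sfl_G(h(\cdot,0)) + \sfl_G(h(1,\cdot)) - \sfl_G(h(\cdot,1)) - \sfl_G(h(0,\cdot)) = 0 \in RO(G),
\]
so that the left-hand side is the $G$-equivariant spectral flow around the boundary of the square $I\times I$ traversed counterclockwise. The strategy is to tile $I\times I$ by small rectangles on which a single spectral cut works, show that the boundary contribution of each tile vanishes in $RO(G)$ by a direct telescoping argument, and then sum up, arranging that interior edges cancel in pairs.

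\emph{Grid construction.} Apply Lemma \ref{lemma-sflconstruction} at every point $(s,t)\in I\times I$ and use compactness to extract partitions $0=s_0<s_1<\dots<s_N=1$ and $0=t_0<t_1<\dots<t_M=1$ together with numbers $a_{ij}>0$ such that on each rectangle $R_{ij}:=[s_{i-1},s_i]\times[t_{j-1},t_j]$ one has $\pm a_{ij}\notin\sigma(h(s,t))$ and $[-a_{ij},a_{ij}]\cap\sigma_{ess}(h(s,t))=\emptyset$. Writing $e_{ij}(s,t):=[E(h(s,t),[0,a_{ij}])]\in RO(G)$, continuity of the finite-rank projections $\chi_{[-a_{ij},a_{ij}]}(h(s,t))$ on $R_{ij}$ makes the following boundary contribution of $R_{ij}$, computed with the single valid cut $a_{ij}$, well-defined:
\[
\Sigma_{ij}=\bigl(e_{ij}(s_i,t_{j-1})-e_{ij}(s_{i-1},t_{j-1})\bigr)+\bigl(e_{ij}(s_i,t_j)-e_{ij}(s_i,t_{j-1})\bigr)-\bigl(e_{ij}(s_i,t_j)-e_{ij}(s_{i-1},t_j)\bigr)-\bigl(e_{ij}(s_{i-1},t_j)-e_{ij}(s_{i-1},t_{j-1})\bigr).
\]
Direct expansion gives $\Sigma_{ij}=0$, as every term appears twice with opposite sign.

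\emph{Global cancellation.} Sum $\sum_{i,j}\Sigma_{ij}=0$. Every interior edge of the grid is traversed twice in opposite directions by the two tiles meeting along it. The main obstacle is that these two tiles in general use different cuts $a_{ij}$ and $a_{i+1,j}$ (resp.\ $a_{i,j+1}$); on the shared edge, however, both cuts are simultaneously valid. The second step in the well-definedness proof of $\sfl_G$ above, which rests only on Lemma \ref{lemma-technical}, applies verbatim to this single subinterval and shows that the spectral flow along the edge computed with either cut is the same element of $RO(G)$. Hence the two opposite-oriented contributions cancel, and only the signed contributions along the four sides of $\partial(I\times I)$ survive.

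\emph{Identification.} By the same choice-independence, grouping the surviving contributions by side yields exactly the definition \eqref{sfl-equiv} of $\sfl_G(h(\cdot,0))$ (bottom, left-to-right) and $\sfl_G(h(1,\cdot))$ (right, bottom-to-top), together with $\sfl_G$ of $h(\cdot,1)$ and $h(0,\cdot)$ traversed backwards. The latter equal $-\sfl_G(h(\cdot,1))$ and $-\sfl_G(h(0,\cdot))$ respectively by Lemma \ref{sfl-reverse}. Adding the four contributions gives the displayed identity, which rearranges to the statement of the proposition.
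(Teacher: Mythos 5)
Your proof is correct and follows essentially the same strategy as the paper's: a Lebesgue-number tiling of the square into rectangles on which a single spectral cut is valid, the vanishing of each tile's boundary contribution, and cancellation along interior edges justified by the cut-independence of $\sfl_G$ (Lemma \ref{lemma-technical}). The only difference is bookkeeping --- you sum all tile contributions at once and cancel interior edges in pairs, whereas the paper telescopes column by column via \eqref{equ-proofhomotopy} --- but the underlying argument is identical.
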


\begin{proof}
As $h(I\times I)\subset\mathcal{FS}_G(W,H)$ is compact, we can find an open cover of this set by finitely many open sets $N_i$, $i=1,\ldots,n$, of the type as in Lemma \ref{lemma-sflconstruction}. This means that for every $i=1,\ldots,n$, there is $a_i>0$ such that for all $T\in N_i$, $\pm a_i\notin\sigma(T)$, $[-a_i,a_i]\cap\sigma_{ess}(T)=\emptyset$, and

\[N_i\ni T\mapsto\chi_{[-a_i,a_i]}(T)\in\mathcal{L}(H)\]
is continuous. The preimages $h^{-1}(N_i)$, $i=1,\ldots,n$ are an open cover of $I\times I$, and consequently there is some $\varepsilon>0$ such that each subset of $I\times I$ of diameter less than $\varepsilon$ is contained in one of the $h^{-1}(N_i)$.\\
Let $0=\lambda_0\leq\ldots\leq \lambda_m=1$ be a partition of $I$ such that $|\lambda_i-\lambda_{i-1}|\leq\frac{\varepsilon}{\sqrt{2}}$ for $1\leq i\leq m$. Then each $h([\lambda_{i-1},\lambda_i]\times[\lambda_{j-1},\lambda_j])$ is contained in one of the sets $N_k$. We now consider the four paths obtained from the boundary of the square $[\lambda_{i-1},\lambda_i]\times[\lambda_{j-1},\lambda_j]$, i.e. the two horizontal paths

\begin{align*}
h^h_{i-1,j}(\lambda)=h(\lambda_{i-1},\lambda),\, \lambda\in[\lambda_{j-1},\lambda_j],\qquad h^h_{i,j}(\lambda)=h(\lambda_{i},\lambda),\, \lambda\in[\lambda_{j-1},\lambda_j],
\end{align*}   
and the two vertical paths

\begin{align*}
h^v_{i,j-1}(\lambda)=h(\lambda,\lambda_{j-1}),\, \lambda\in[\lambda_{i-1},\lambda_i],\qquad  h^v_{i,j}(\lambda)=h(\lambda,\lambda_{j}),\, \lambda\in[\lambda_{i-1},\lambda_i].
\end{align*}
We denote as in Lemma \ref{sfl-reverse} by $(h^v_{i,j})'$ the reverse path of $h^v_{i,j}$. Now $h^v_{i,j-1}\ast h^h_{i,j}\ast (h^v_{i,j})'$ and $h^h_{i-1,j}$ are paths in $N_k$ having the same initial and endpoints. We obtain from \eqref{sfl-equiv} and Lemma \ref{sfl-concatenation}

\begin{align}\label{equ-proofhomotopy}
\begin{split}
\sfl_G(h^h_{i-1,j})&=[E((h^h_{i-1,j})_1,[0,a_k])]-[E((h^h_{i-1,j})_0,[0,a_k])]\\
&=[E((h^v_{i,j-1}\ast h^h_{i,j}\ast (h^v_{i,j})')_1,[0,a_k])]-[E((h^v_{i,j-1}\ast h^h_{i,j}\ast (h^v_{i,j})')_0,[0,a_k])]\\
&=\sfl_G(h^v_{i,j-1}\ast h^h_{i,j}\ast (h^v_{i,j})')=\sfl_G(h^v_{i,j-1})+\sfl_G(h^h_{i,j})+\sfl_G((h^v_{i,j})')\\
&=\sfl_G(h^v_{i,j-1})+\sfl_G(h^h_{i,j})-\sfl_G(h^v_{i,j}),
\end{split}
\end{align} 
where we have used Lemma \ref{sfl-reverse} in the final equality. Consequently, 

\begin{align*}
\sfl_G(h(0,\cdot))&=\sum^m_{j=1}{\sfl_G(h^h_{0,j})}=\sum^m_{j=1}{\left(\sfl_G(h^v_{1,j-1})+\sfl_G(h^h_{1,j})-\sfl_G(h^v_{1,j})\right)}\\
&=\sfl_G(h^v_{1,0})-\sfl_G(h^v_{1,m})+\sum^m_{j=1}{\sfl_G(h^h_{1,j})}.
\end{align*}
Now, again by \eqref{equ-proofhomotopy},

\begin{align*}
\sum^m_{j=1}{\sfl_G(h^h_{1,j})}&=\sum^m_{j=1}{\left(\sfl_G(h^v_{2,j-1})+\sfl_G(h^h_{2,j})-\sfl_G(h^v_{2,j})\right)}=\sfl_G(h^v_{2,0})-\sfl_G(h^v_{2,m})+\sum^m_{j=1}{\sfl_G(h^h_{2,j})},
\end{align*}
and we obtain

\begin{align*}
\sfl_G(h(0,\cdot))&=\sfl_G(h^v_{1,0})+\sfl_G(h^v_{2,0})-\sfl_G(h^v_{1,m})-\sfl_G(h^v_{2,m})+\sum^m_{j=1}{\sfl_G(h^h_{2,j})}.
\end{align*}
If we continue until $i=m$, we get

\begin{align*}
\sfl_G(h(0,\cdot))&=\sum^m_{i=1}{\sfl_G(h^v_{i,0})}-\sum^m_{i=1}{\sfl_G(h^v_{i,m})}+\sum^m_{j=1}{\sfl_G(h^h_{m,j})}\\
&=\sfl_G(h(\cdot,0))-\sfl_G(h(\cdot,1))+\sfl_G(h(1,\cdot)),
\end{align*}
where we have used Lemma \ref{sfl-concatenation} in the final equality. This is the claimed equation.
\end{proof}
\noindent
As the $G$-equivariant spectral flow is trivial for constant paths, we immediately obtain the homotopy invariance under homotopies having constant endpoints.

\begin{cor}
If $h:I\times I\rightarrow\mathcal{FS}_G(W,H)$ is a homotopy such that $h(\cdot,0)$ and $h(\cdot,1)$ are constant, then

\[\sfl_G(h(0,\cdot))=\sfl_G(h(1,\cdot))\in RO(G).\]
\end{cor}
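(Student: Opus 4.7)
The plan is to read this off essentially for free from Proposition \ref{prop-hominv}, once we establish the auxiliary fact that the $G$-equivariant spectral flow of a constant path vanishes. So first I would verify this auxiliary fact. If $\mathcal{A}_\lambda\equiv T\in\mathcal{FS}_G(W,H)$ for all $\lambda\in I$, then choosing the trivial partition $0=\lambda_0<\lambda_1=1$ and any $a>0$ with $\pm a\notin\sigma(T)$ and $[-a,a]\cap\sigma_{ess}(T)=\emptyset$, formula \eqref{sfl-equiv} directly gives
\[\sfl_G(\mathcal{A})=[E(T,[0,a])]-[E(T,[0,a])]=0\in RO(G).\]
This is the sentence promised in the text just before the statement.

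Next I would apply Proposition \ref{prop-hominv} to the given homotopy $h$, which yields
\[\sfl_G(h(0,\cdot))=\sfl_G(h(\cdot,0))+\sfl_G(h(1,\cdot))-\sfl_G(h(\cdot,1))\in RO(G).\]
Since $h(\cdot,0)$ and $h(\cdot,1)$ are constant paths in $\mathcal{FS}_G(W,H)$ by hypothesis, the auxiliary fact forces $\sfl_G(h(\cdot,0))=0$ and $\sfl_G(h(\cdot,1))=0$. Substituting these into the above equality gives $\sfl_G(h(0,\cdot))=\sfl_G(h(1,\cdot))$, which is the claim.

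There is no real obstacle: everything is packaged in Proposition \ref{prop-hominv}, and the only step beyond invoking it is the one-line verification that constant paths have vanishing $G$-equivariant spectral flow. The only thing worth being careful about is that the proposition applies to arbitrary continuous homotopies $h:I\times I\to\mathcal{FS}_G(W,H)$, so no additional regularity or endpoint assumption is needed for the substitution to be legitimate.
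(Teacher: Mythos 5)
Your proof is correct and matches the paper's reasoning exactly: the paper likewise derives this corollary by combining Proposition \ref{prop-hominv} with the observation (stated in the sentence preceding the corollary) that constant paths have vanishing $G$-equivariant spectral flow, which follows immediately from \eqref{sfl-equiv} just as you computed.
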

\noindent
Moreover, Lemma \ref{sfl-zero} yields the invariance under homotopies having invertible endpoints.

\begin{cor}
If $h:I\times I\rightarrow\mathcal{FS}_G(W,H)$ is a homotopy such that $h(\lambda,0)$ and $h(\lambda,1)$ are invertible for all $\lambda\in I$, then

\[\sfl_G(h(0,\cdot))=\sfl_G(h(1,\cdot))\in RO(G).\]
\end{cor}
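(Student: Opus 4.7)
The plan is to combine Proposition \ref{prop-hominv} with Lemma \ref{sfl-zero}, essentially in one line. Proposition \ref{prop-hominv} applied to the homotopy $h$ gives the identity
\[\sfl_G(h(0,\cdot))=\sfl_G(h(\cdot,0))+\sfl_G(h(1,\cdot))-\sfl_G(h(\cdot,1))\in RO(G),\]
so the corollary will follow once I show that the two "boundary" paths $h(\cdot,0)$ and $h(\cdot,1)$ contribute trivially.

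For this, I would simply note that by hypothesis $h(\lambda,0)\in GL(W,H)\cap\mathcal{FS}_G(W,H)$ for all $\lambda\in I$, and likewise for $h(\lambda,1)$. Hence Lemma \ref{sfl-zero} applies to both paths and yields $\sfl_G(h(\cdot,0))=\sfl_G(h(\cdot,1))=0\in RO(G)$. Substituting this into the displayed identity above immediately gives $\sfl_G(h(0,\cdot))=\sfl_G(h(1,\cdot))$, which is the claim.

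There is no real obstacle here; the statement is essentially a direct corollary of the two preceding results, exactly as the comment preceding it suggests. The only thing to verify is that the hypothesis "$h(\lambda,0)$ and $h(\lambda,1)$ are invertible for all $\lambda\in I$" is used in its full strength, namely that invertibility holds along the entire parameter range and not merely at a single point — which is precisely what Lemma \ref{sfl-zero} requires to conclude triviality of the spectral flow of the corresponding boundary paths.
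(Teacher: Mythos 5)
Your argument is exactly the one the paper intends: apply Proposition \ref{prop-hominv} and then use Lemma \ref{sfl-zero} to conclude that the boundary paths $h(\cdot,0)$ and $h(\cdot,1)$, being paths of invertible operators, have trivial $G$-equivariant spectral flow. This is correct and matches the paper's (implicit) proof.
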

\noindent
Finally, let us point out that further homotopy invariance properties can be obtained from Proposition \ref{prop-hominv}, e.g., the invariance under free homotopies of loops in $\mathcal{FS}_G(W,H)$ (cf. \cite{CompSfl}).

%%%%%%%%%%%%%%%%%%%%%%%%%%%%%%%%%%%%%%%%%%%%%%%%%%%%%%%%%%%%%%%%%%%%%%%%%%%%%%%%%%%%%%%%%%%%%%%%%%%%%%%%%%%%%%%%%%%%%%%%%%%%%%%%%%%%%%%%%%%%%%%%%%%%%%%%%%%%%%%%%%%%%%%%%%%%%%%%%%%%%%%%%%%%%%%%%%%%%%%%%%%%%%%%%%%%%%%%%%%%%%%%%%%%%%%%%%%%%%%%%%%%%%%%%%%%%%%%%%%%%%%%%%%%%%%%%%%

\subsection{A Simple Example}\label{sect-simpEx}
In this section we consider a path $\widetilde{\mathcal{A}}=\{\widetilde{\mathcal{A}_\lambda}\}_{\lambda\in I}$ in $\mathcal{FS}(W,H)$, where $W$ and $H$ are as before Hilbert spaces with a dense embedding $W\hookrightarrow H$, but we do not assume that there is a group action on $H$. The path $\mathcal{A}=\{\mathcal{A}_\lambda\}_{\lambda\in I}$ for $\mathcal{A}_\lambda:W\times W\rightarrow H\times H$, where

\[\mathcal{A}_\lambda=\begin{pmatrix}
\widetilde{\mathcal{A}}_\lambda&0\\
0&-\widetilde{\mathcal{A}}_\lambda
\end{pmatrix},\]
is presumably the easiest type of a path having a vanishing spectral flow for symmetry reasons. Note that indeed $\sfl(\mathcal{A})=0$ by \eqref{sfl} as the eigenvalues and their multiplicities of any $\mathcal{A}_\lambda$ are symmetric about $0$.\\
As all irreducible real representations of $\mathbb{Z}_2$ are one dimensional, every real $k$-dimensional representation is up to isomorphism a $k\times k$ diagonal matrix of the form $\diag(1,\ldots,1,-1,\ldots,-1)$. Thus we obtain an isomorphism $\phi:RO(\mathbb{Z}_2)\rightarrow\mathbb{Z}\oplus\mathbb{Z}$ of abelian groups by setting
 
 \begin{align}\label{phi}
 \phi([E]-[F])=(\dim(E)-\dim(F),\dim(E_G)-\dim(F_G)),
 \end{align}
 where $E_G\subset E$ and $F_G\subset F$ denote the spaces of fixed points under the group action.\\
Now let us consider on $H\times H$ the action of $G=\mathbb{Z}_2=\{e,g\}$ given by $g(u,v)=(u,-v)$. The aim of this section is to show that under the identification $RO(G)=\mathbb{Z}\oplus\mathbb{Z}$ by \eqref{phi}, the equivariant spectral flow of $\mathcal{A}$ is

\begin{align}\label{simpleexample}
\sfl_G(\mathcal{A})=(\sfl(\mathcal{A}),\sfl(\widetilde{\mathcal{A}}))=(0,\sfl(\widetilde{\mathcal{A}}))
\end{align}
and so it is non-trivial if and only if $\sfl(\widetilde{\mathcal{A}})$ does not vanish.\\
Let $0=\lambda_0< \lambda_1<\ldots< \lambda_N=1$ and $a_i>0$ be as in the definition \eqref{sfl} for the spectral flow of the path $\widetilde{\mathcal{A}}$. Then, as $\sigma_p(\mathcal{A}_\lambda)=\sigma_p(\widetilde{\mathcal{A}}_\lambda)\cup\sigma_p(-\widetilde{\mathcal{A}}_\lambda)$ and $\sigma_{ess}(\mathcal{A}_\lambda)=\sigma_{ess}(\widetilde{\mathcal{A}}_\lambda)\cup\sigma_{ess}(-\widetilde{\mathcal{A}}_\lambda)$, we see that the same numbers $\lambda_0,\ldots, \lambda_N$ and $a_1,\ldots,a_N$ can be used for $\mathcal{A}$ in \eqref{sfl-equiv}. We have for $1\leq i\leq N$,

\begin{align*}
[E(\mathcal{A}_{\lambda_i},[0,a_i])]-[E(\mathcal{A}_{\lambda_{i-1}},[0,a_i])]&=[E(\widetilde{\mathcal{A}}_{\lambda_i},[0,a_i])\times E(-\widetilde{\mathcal{A}}_{\lambda_i},[0,a_i])]\\
&-[E(\widetilde{\mathcal{A}}_{\lambda_{i-1}},[0,a_i])\times E(-\widetilde{\mathcal{A}}_{\lambda_{i-1}},[0,a_i])])\\
&=[E(\widetilde{\mathcal{A}}_{\lambda_i},[0,a_i])\times E(\widetilde{\mathcal{A}}_{\lambda_i},[-a_i,0])]\\
&-[E(\widetilde{\mathcal{A}}_{\lambda_{i-1}},[0,a_i])\times E(\widetilde{\mathcal{A}}_{\lambda_{i-1}},[-a_i,0])]).
\end{align*}
Hence we obtain from \eqref{phi} that $\phi([E(\mathcal{A}_{\lambda_i},[0,a_i])]-[E(\mathcal{A}_{\lambda_{i-1}},[0,a_i])])$ is given by

\begin{align*}
(\dim(E(\mathcal{A}_{\lambda_i},[0,a_i]))-\dim(E(\mathcal{A}_{\lambda_{i-1}},[0,a_i])), \dim(E(\widetilde{\mathcal{A}}_{\lambda_i},[0,a_i]))-\dim(E(\widetilde{\mathcal{A}}_{\lambda_{i-1}},[0,a_i]))).
\end{align*}
Now \eqref{simpleexample} follows from the definitions \eqref{sfl} and \eqref{sfl-equiv}.

\section{Applications in Bifurcation Theory of Critical Points of Functionals with Symmetries}

%%%%%%%%%%%%%%%%%%%%%%%%%%%%%%%%%%%%%%%%%%%%%%%%%%%%%%%%%%%%%%%%%%%%%%%%%%%%%%%%%%%%%%%%%%%%%%%%%%%%%%%%%%%%%%%%%%%%%%%%%%%%%%%%%%%%%%%%%%%%%%%%%%%%%%%%%%%%%%%%%%%%%%%%%%%%%%%%%%%%%%%%%%%%%%%%%%%%%%%%%%%%%%%%%%%%%%%%%%%%%%%%%%%%%%%%%%%%%%%%%%%%%%%%%%%%%%%%%%%%%%%%%%%%%%%%%%%%%%%%%%%%%%%%%%%%%%%%%%%%%%%%%%%%%%%%%%%%%%%%%%%%%%%%%%%%%%%%%%%%%%%%%%%%%%%%%%%%%%%%%%

\subsection{$G$-equivariant Spectral Flow, Morse Index and Bifurcation}
We consider equations of the type $\nabla f_\lambda(u)=0$, where $f:I\times H\rightarrow \mathbb{R}$ is a family of $C^2$-functionals on an infinite dimensional real Hilbert space $H$, and we assume that $\nabla f_\lambda(0)=0$ for all $\lambda\in I$, i.e. $0\in H$ is a critical point of all functionals $f_\lambda$. A \textit{bifurcation point} is a parameter value $\lambda^\ast\in I$ at which non-trivial critical points branch off from the trivial ones $I\times\{0\}$, i.e., in every neighbourhood of $(\lambda^\ast,0)\in I\times H$ there is some $(\lambda,u)$ such that $\nabla f_\lambda(u)=0$ and $u\neq 0$. A crucial role for studying the existence of bifurcation points is played by the family of Hessians $L_\lambda:=D^2_0f_\lambda$ at $0\in H$, which are bounded selfadjoint operators on $H$. Note that, by the implicit function theorem, $L_\lambda$ is non-invertible if $\lambda$ is a bifurcation point. However, it is not difficult to see that the non-invertibility of $L_\lambda$ is not sufficient for the existence of bifurcation points.\\
It is a common assumption that $L_\lambda=D^2_0f_\lambda$ are Fredholm operators, i.e., $L_\lambda\in\mathcal{FS}(H):=\mathcal{FS}(H,H)$. It was shown by Atiyah and Singer in \cite{AtiyahSinger} that $\mathcal{FS}(H)$ has the three connected components

\begin{align*}
\mathcal{FS}_+(H)&:=\{T\in\mathcal{FS}(H):\, \sigma_{ess}(T)\subset(0,+\infty)\}\\
\mathcal{FS}_-(H)&:=\{T\in\mathcal{FS}(H):\, \sigma_{ess}(T)\subset(-\infty,0)\},
\end{align*}

and
\begin{align*}
\mathcal{FS}_\ast(H):=\mathcal{FS}(H)\setminus\mathcal{FS}_\pm(H).
\end{align*}
The operators in $\mathcal{FS}_+(H)$ have a finite Morse index

\begin{align}\label{Morse}
\mu_-(L_\lambda)=\dim\left(\oplus_{\mu<0}\{u\in H:\, L_\lambda u=\mu u\}\right)
\end{align}
i.e. they have only finitely many negative eigenvalues including multiplicities. A well-known theorem in nonlinear analysis says that, if $L_\lambda\in\mathcal{FS}_+(H)$, $\lambda\in I$,  $L_0$ and $L_1$ are invertible and have different Morse indices,
 
 \begin{align}\label{Smoller-Wasserman}
 \mu_-(L_0)\neq\mu_-(L_1),
 \end{align}  
then there is a bifurcation of critical points from the trivial branch (see \cite{Mawhin}, \cite{SmollerWasserman}).\\ 
Smoller and Wasserman considered in \cite{SmollerWasserman} a finer invariant than the Morse index in \eqref{Smoller-Wasserman} to study bifurcation for equations with symmetries. Let us consider as before a family of functionals $f:I\times H\rightarrow\mathbb{R}$, but we now require in addition that each $f_\lambda$ is $G$-invariant, i.e., $f(g u)=f(u)$, $g\in G$, where $G$ is a compact Lie group acting orthogonally on $H$. Under this assumption, the selfadjoint operators $L_\lambda=D^2_0f_\lambda$ are $G$-equivariant, i.e. $L_\lambda(gu)=g(L_\lambda u)$. If we now again assume that $L_\lambda\in\mathcal{FS}_+(H)$, then the spaces 

\begin{align}\label{equMorse}
E^-(L_\lambda)=\oplus_{\mu<0}\{u\in H:\, L_\lambda u=\mu u\}
\end{align}
in \eqref{Morse} are of finite dimension and so we obtain finite dimensional real representations of the compact Lie group $G$. Henceforth, we assume that $G$ is \textit{nice} in the sense of \cite{SmollerWasserman}, i.e., any orthogonal representations $E$ and $F$ of $G$ are isomorphic if the quotients $D(E)/S(E)$ and $D(F)/S(F)$ of the unit discs by the unit spheres have the same $G$-equivariant homotopy type. For example, if $G_0$ denotes the connected component of the identity in $G$, then $G$ is nice if $G/G_0$ is trivial or a finite product of $\mathbb{Z}_2$ or $\mathbb{Z}_3$. Thus, in particular, $S^1$, $O(n)$ and $SO(n)$ are nice. The following theorem is the main result of \cite{SmollerWasserman}.

\begin{theorem}\label{thm:Smoller-Wasserman}
If $L_\lambda\in\mathcal{FS}_+(H)\cap\mathcal{FS}_G(H)$, $\lambda\in I$, $L_0$,$L_1$ are invertible and $E^-(L_0)$ and $E^-(L_1)$ are not isomorphic as $G$-representations, then there is a bifurcation of critical points for $f$. 
\end{theorem}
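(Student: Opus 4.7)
The plan is to prove the theorem in two stages: first identify the $G$-equivariant spectral flow of the path $L=\{L_\lambda\}_{\lambda\in I}$ of Hessians explicitly, and then derive a bifurcation from its non-triviality.

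The first stage consists of establishing the formula
\[
\sfl_G(L) = [E^-(L_0)] - [E^-(L_1)] \in RO(G).
\]
The hypothesis $L_\lambda\in\mathcal{FS}_+(H)$ forces $\sigma_{ess}(L_\lambda)\subset(0,\infty)$, so the negative part of the spectrum is a finite set of eigenvalues of finite multiplicity and each $E^-(L_\lambda)$ is a finite-dimensional $G$-representation. Using the homotopy invariance of $\sfl_G$ furnished by Proposition \ref{prop-hominv}, I would first perturb $L$ within $\mathcal{FS}_+(H)\cap\mathcal{FS}_G(H)$ to a generic $G$-equivariant path along which all spectral crossings through $0$ are simple and occur at isolated parameter values. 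Refining the partition $0=\lambda_0<\ldots<\lambda_N=1$ in \eqref{sfl-equiv} so that each sub-interval contains at most one crossing eigenvalue with $G$-invariant eigenspace $V_i$, and choosing $a_i>0$ small enough that the spectral projection $\chi_{[-a_i,a_i]}(L_\lambda)$ on that sub-interval captures exactly this crossing eigenvalue, Lemma \ref{lemma-technical} cancels the non-crossing portion of the spectrum inside $[-a_i,a_i]$ and the contribution of each sub-interval reduces to $\pm[V_i]$ according to the direction of the crossing. Telescoping these contributions against the jumps of the $G$-representations $E^-(L_\lambda)$ at the crossings yields the claimed formula. The hypothesis $E^-(L_0)\not\cong E^-(L_1)$, combined with cancellation for finite-dimensional representations of compact Lie groups, then gives $\sfl_G(L)\neq 0$.

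The second stage consists of showing that a non-trivial $G$-equivariant spectral flow forces a bifurcation. Arguing by contradiction, suppose no bifurcation occurs; then compactness of $I$ and continuity of $\nabla f$ produce some $r>0$ such that $\nabla f_\lambda(u)\neq 0$ for every $\lambda\in I$ and every $u$ with $0<\|u\|\le r$, and the $G$-invariance of $f$ upgrades this non-vanishing to a $G$-equivariant statement on the small sphere $S_r\subset H$. Using the finite-dimensional negative spectral subspaces $E^-(L_\lambda)$ together with the negative gradient flow of $f_{\lambda^*}$ at each crossing value $\lambda^*$, one constructs a $G$-equivariant homotopy equivalence of the disk-sphere pairs $(D(E^-(L_0)),S(E^-(L_0)))$ and $(D(E^-(L_1)),S(E^-(L_1)))$. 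The Smoller--Wasserman niceness hypothesis on $G$ then upgrades this $G$-homotopy equivalence to an honest isomorphism $E^-(L_0)\cong E^-(L_1)$ of $G$-representations, contradicting the conclusion of the first stage.

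The main obstacle is the second stage, namely tracking the isomorphism class of $E^-(L_\lambda)$ across a spectral crossing through $0$, where its dimension actually jumps. The absence-of-bifurcation hypothesis is what provides the necessary $G$-equivariant deformation of small spheres around each crossing, and the niceness of $G$ is precisely the input that converts the resulting $G$-homotopy type information back into a representation-theoretic isomorphism. By contrast, the first stage is essentially bookkeeping once genericity and homotopy invariance of $\sfl_G$ reduce the computation to a finite sum of simple crossings.
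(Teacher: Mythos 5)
First, a point of orientation: the paper does not prove Theorem \ref{thm:Smoller-Wasserman} at all -- it is quoted verbatim as the main result of \cite{SmollerWasserman} and used as a black box (the paper's own contribution in this section is Proposition \ref{prop-sfldiffMorse} and Corollary \ref{Cor-SmollerWasserman}). So you are attempting to prove an external theorem, and your attempt has to be judged on its own. Your first stage is in fact exactly the paper's Proposition \ref{prop-sfldiffMorse}, but it is a detour here: the theorem's hypothesis already hands you $E^-(L_0)\not\cong E^-(L_1)$, and your stage-two contradiction is against precisely that hypothesis, so the spectral flow never needs to enter. Moreover, your route to stage one via a generic perturbation with ``simple'' crossings is problematic in the equivariant setting: eigenspaces of a $G$-equivariant operator are $G$-representations, so on an isotypic component of a nontrivial irreducible representation of dimension $d>1$ every eigenvalue has multiplicity at least $d$, and no equivariant perturbation can make crossings simple. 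The paper's proof of Proposition \ref{prop-sfldiffMorse} avoids genericity entirely, using only Lemma \ref{lemma-technical} on the intervals $[-a,a]$ and $[-m,-a]$ and a telescoping sum.

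The genuine gap is in stage two, which is where the entire content of the theorem lives and where you write only that ``one constructs a $G$-equivariant homotopy equivalence of the disk-sphere pairs.'' That sentence \emph{is} the theorem. To make it a proof you need (i) a precise mechanism turning ``no bifurcation on $I$'' into the statement that $\{0\}$ is an isolated invariant set for the negative gradient flow of $f_\lambda$, uniformly in $\lambda$; (ii) an equivariant homotopy invariant of that situation which is constant in $\lambda$ under (i) -- in \cite{SmollerWasserman} this is the $G$-equivariant Conley index together with its continuation property; (iii) the computation that at a nondegenerate critical point with $L_\lambda$ invertible this invariant is the $G$-homotopy type of $D(E^-(L_\lambda))/S(E^-(L_\lambda))$; and (iv) a reduction making (ii)--(iii) legitimate on the infinite-dimensional space $H$, which is where the hypothesis $L_\lambda\in\mathcal{FS}_+(H)$ is actually used (the negative and null spectral subspaces are finite dimensional, so a Lyapunov--Schmidt or finite-dimensional Conley-index reduction applies). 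Only after (i)--(iv) does the niceness of $G$ do its job of converting equality of $G$-homotopy types into an isomorphism of representations. None of these steps is supplied or even named, so as written the argument assumes what is to be proved.
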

\noindent
The aim of this section is to show that the assumption in Theorem \ref{thm:Smoller-Wasserman} can be replaced by requiring that the $G$-equivariant spectral flow of the path $L=\{L_\lambda\}_{\lambda\in I}$ in $\mathcal{FS}_G(H)$ is a non-trivial element in $RO(G)$. This follows from the next proposition.

\begin{prop}\label{prop-sfldiffMorse}
If $L=\{L_\lambda\}_{\lambda\in I}$ is a path in $\mathcal{FS}_G(H)\cap\mathcal{FS}_+(H)$, then

\[\sfl_G(L)=[E^-(L_0)]-[E^-(L_1)]\in RO(G).\]
\end{prop}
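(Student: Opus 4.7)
The plan is to read off $\sfl_G(L)$ directly from its defining formula \eqref{sfl-equiv} and match the result with $[E^-(L_0)]-[E^-(L_1)]$. First I want to observe that $E^-(L_\lambda)$ is a finite-dimensional $G$-representation: since $\sigma_{ess}(L_\lambda)\subset(0,+\infty)$, any negative spectral value is an isolated eigenvalue of finite multiplicity; such eigenvalues cannot accumulate at $0$ (otherwise $0$ would lie in $\sigma_{ess}(L_\lambda)$) and, as $L_\lambda$ is a bounded operator, they cannot escape to $-\infty$, so there are only finitely many. Hence the right hand side of the statement is a well-defined element of $RO(G)$.

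Now fix a partition $0=\lambda_0<\ldots<\lambda_N=1$ and numbers $a_i>0$ as in \eqref{sfl-equiv}. By continuity of $L$ and compactness of $[\lambda_{i-1},\lambda_i]$, I can choose $M_i>\sup_{\lambda\in[\lambda_{i-1},\lambda_i]}\|L_\lambda\|$, so that $-M_i$ lies strictly below $\sigma(L_\lambda)$ for every such $\lambda$. Since $\pm a_i$ and $-M_i$ are outside $\sigma(L_\lambda)$ and neither $[-M_i,-a_i]$ nor $[-a_i,a_i]$ meets $\sigma_{ess}(L_\lambda)\subset(0,+\infty)$, the spectral projections $\chi_{[-M_i,-a_i]}(L_\lambda)$, $\chi_{[-a_i,a_i]}(L_\lambda)$ and $\chi_{[0,a_i]}(L_\lambda)$ are $G$-equivariant orthogonal projections of finite rank, and there are $G$-invariant orthogonal decompositions
\[
E^-(L_\lambda)=E(L_\lambda,[-M_i,-a_i])\oplus E(L_\lambda,[-a_i,0)),\qquad E(L_\lambda,[-a_i,a_i])=E(L_\lambda,[-a_i,0))\oplus E(L_\lambda,[0,a_i]).
\]
Eliminating the common summand gives in $RO(G)$
\[
[E^-(L_\lambda)]=[E(L_\lambda,[-M_i,-a_i])]+[E(L_\lambda,[-a_i,a_i])]-[E(L_\lambda,[0,a_i])].
\]

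The final step is to apply Lemma \ref{lemma-technical} to the intervals $[-M_i,-a_i]$ and $[-a_i,a_i]$ on the parameter subinterval $[\lambda_{i-1},\lambda_i]$: their endpoints are outside $\sigma(L_\lambda)$ and they are disjoint from $\sigma_{ess}(L_\lambda)$, so the corresponding spectral spaces at $\lambda_{i-1}$ and $\lambda_i$ are isomorphic as $G$-representations and contribute nothing to the difference. This yields
\[
[E^-(L_{\lambda_i})]-[E^-(L_{\lambda_{i-1}})]=-\bigl([E(L_{\lambda_i},[0,a_i])]-[E(L_{\lambda_{i-1}},[0,a_i])]\bigr),
\]
and summing over $i=1,\ldots,N$ produces a telescoping cancellation on the left, leaving $[E^-(L_1)]-[E^-(L_0)]$, while the right hand side is exactly $-\sfl_G(L)$ by \eqref{sfl-equiv}. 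Rearranging gives the claimed identity.

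I do not anticipate a genuine obstacle: the argument is bookkeeping with spectral projections in $RO(G)$ plus Lemma \ref{lemma-technical}. The only point requiring some care is the uniform choice of the lower cut-off $M_i$ on each subinterval of the partition, which is needed so that Lemma \ref{lemma-technical} can be invoked on the auxiliary region $[-M_i,-a_i]$ collecting the eigenvalues below $-a_i$.
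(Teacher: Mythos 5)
Your proof is correct and follows essentially the same route as the paper's: decompose $E^-(L_\lambda)$ into a piece below $-a_i$ and a piece in $[-a_i,0)$, use Lemma \ref{lemma-technical} on the auxiliary intervals to cancel their contribution on each subinterval of the partition, and telescope. The only cosmetic difference is that you take the lower cut-off $-M_i$ below $-\sup_\lambda\|L_\lambda\|$, whereas the paper invokes the finiteness of the Morse index to find $m$ with $(-\infty,-m]\cap\sigma(L_\lambda)=\emptyset$; both choices are valid for bounded operators in $\mathcal{FS}_+(H)$.
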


\begin{proof}
The proof falls into three steps. Let us firstly assume that there is $[c,d]\subset I$ and some $a>0$ such that $\pm a\notin\sigma(L_\lambda)$ and such that there is no essential spectrum in $[-a,a]\cap\sigma(L_\lambda)$ for all $\lambda\in [c,d]$. By Lemma \ref{lemma-technical}, $E(L_c,[-a,a])$ and $E(L_d,[-a,a])$ are isomorphic $G$-representations. Consequently,

\begin{align*}
0&=[E(L_c,[-a,a])]-[E(L_d,[-a,a])]\\
&=[E(L_c,[0,a])\oplus E(L_c,[-a,0))]-[E(L_d,[0,a])\oplus E(L_d,[-a,0))]\\
&=([E(L_c,[0,a])]-[E(L_d,[0,a]))+([E(L_c,[-a,0))]-E(L_d,[-a,0))]
\end{align*}
which means that

\begin{align}\label{Einv}
[E(L_d,[0,a])]-[E(L_c,[0,a])]=[E(L_c,[-a,0))]-[E(L_d,[-a,0))]\in RO(G).
\end{align}
Secondly, as $\mu_-(L_\lambda)<\infty$ for all $\lambda\in[c,d]$, there exists $m\geq a>0$ such that 

\begin{align}\label{specdisjoint}
(-\infty,-m]\cap\sigma(L_\lambda)=\emptyset.
\end{align}
As $- a\not\in\sigma(L_\lambda)$ and $(-\infty,0)\cap\sigma_{ess}(L_\lambda)=\emptyset$ for all $\lambda\in[c,d]$, $E(L_c,[-m,-a])$ and $E(L_d,[-m,-a])$ are isomorphic $G$-representations by Lemma \ref{lemma-technical}. Hence

\[[E(L_c,[-m,-a])]-[E(L_d,[-m,-a])]=0\in RO(G),\] 
and it follows from \eqref{Einv} that

\begin{align*}
[E(L_d,[0,a])]-[E(L_c,[0,a])]&=[E(L_c,[-a,0))]-[E(L_d,[-a,0))]\\
&+[E(L_c,[-m,-a])]-[E(L_d,[-m,-a])]\\
&=[E(L_c,[-a,0))\oplus E(L_c,[-m,-a])]\\
&-[E(L_d,[-a,0))\oplus E(L_d,[-m,-a])]\\
&=[E(L_c,[-m,0))]-[E(L_d,[-m,0))]\\
&=[E^-(L_c)]-[E^-(L_d)]\in RO(G),
\end{align*} 
where we have used \eqref{specdisjoint} in the final step.\\
Finally, we consider a partition $0=\lambda_0<\ldots<\lambda_N=1$ of $I$ and numbers $a_i>0$, $i=1,\ldots, N$, as in \eqref{sfl-equiv}. We obtain

\begin{align*}
\sfl_G(L)&=\sum^N_{i=1}{([E(L_{\lambda_i},[0,a_i])]-[E(L_{\lambda_{i-1}},[0,a_i])])}=\sum^N_{i=1}{([E^-(L_{\lambda_{i-1}})]-[E^-(L_{\lambda_i})])}\\
&=[E^-(L_0)]-[E^-(L_1)]\in RO(G),
\end{align*}  
where we have used that the last sum is telescoping.
\end{proof}
\noindent
The previous proposition and Theorem \ref{thm:Smoller-Wasserman} yield the following corollary, where again $L_\lambda$ are the Hessians of the family of $G$-invariant $C^2$-functionals $f:I\times H\rightarrow\mathbb{R}$ and $G$ is assumed to be nice.

\begin{cor}\label{Cor-SmollerWasserman}
If $L_\lambda\in\mathcal{FS}_+(H)\cap\mathcal{FS}_G(H)$, $\lambda\in I$, $L_0$,$L_1$ are invertible and $\sfl_G(L)\neq 0$, then there is a bifurcation point of critical points of $f$. 
\end{cor}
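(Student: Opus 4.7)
The plan is to deduce the statement directly by combining Proposition \ref{prop-sfldiffMorse} with Theorem \ref{thm:Smoller-Wasserman}. Since $L=\{L_\lambda\}_{\lambda\in I}$ is a path in $\mathcal{FS}_G(H)\cap\mathcal{FS}_+(H)$, Proposition \ref{prop-sfldiffMorse} gives $\sfl_G(L)=[E^-(L_0)]-[E^-(L_1)]\in RO(G)$. The hypothesis $\sfl_G(L)\neq 0$ therefore translates into the inequality $[E^-(L_0)]\neq [E^-(L_1)]$ in $RO(G)$.

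The next step is to upgrade this inequality of classes in the Grothendieck group to non-isomorphism of the underlying genuine representations. The key point is that the natural map from the monoid of isomorphism classes of finite dimensional orthogonal $G$-representations (under direct sum) into its Grothendieck group $RO(G)$ is injective: by complete reducibility of representations of a compact Lie group, every such representation decomposes uniquely into irreducibles up to isomorphism, and so the direct-sum monoid is cancellative. Hence if $E^-(L_0)$ and $E^-(L_1)$ were isomorphic as $G$-representations, their classes in $RO(G)$ would agree, contrary to what was just established.

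With $L_\lambda\in\mathcal{FS}_+(H)\cap\mathcal{FS}_G(H)$ for all $\lambda\in I$, with $L_0, L_1$ invertible, and with $E^-(L_0), E^-(L_1)$ non-isomorphic as $G$-representations, all hypotheses of Theorem \ref{thm:Smoller-Wasserman} are met, and the theorem yields the existence of a bifurcation point of critical points of $f$. The only genuinely substantive step is the cancellation property of the representation monoid, which is classical; everything else is bookkeeping around Proposition \ref{prop-sfldiffMorse}.
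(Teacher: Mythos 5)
Your proposal is correct and is essentially the paper's own proof: apply Proposition \ref{prop-sfldiffMorse} to identify $\sfl_G(L)$ with $[E^-(L_0)]-[E^-(L_1)]$ and then invoke Theorem \ref{thm:Smoller-Wasserman}. Note that the cancellation property of the representation monoid, which you flag as the substantive step, is not actually needed: the implication you use is only that isomorphic representations have equal classes in $RO(G)$ (so non-equal classes force non-isomorphism), which holds by the very definition of the Grothendieck group.
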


\begin{proof}
If $\sfl_G(L)\neq 0$, then 

\[[E^-(L_0)]-[E^-(L_1)]\neq 0\in RO(G)\]
by Proposition \ref{prop-sfldiffMorse}. Consequently, $E^-(L_0)$ and $E^-(L_1)$ cannot be isomorphic as $G$-representations and the assertion follows from Theorem \ref{thm:Smoller-Wasserman}.  
\end{proof}

%\noindent

%of the generality as \eqref{Smoller-Wasserman} was lacking. Fitzpatrick, Pejsachowicz and Recht considered in \cite{Specflow} the bifurcation of critical points for families of functionals $f:I\times H\rightarrow\mathbb{R}$ only assuming that the operators $L_\lambda=D^2_0f_\lambda$ are Fredholm and without imposing any restriction on the finiteness of the Morse indices (see also \cite{BifJac}). They proved the following generalisation of \eqref{Smoller-Wasserman}.

%\begin{theorem}{\cite{Specflow}}\label{SFLbif}
%If $L_\lambda\in\mathcal{FS}(H)$, $\lambda\in I$, $L_0$, $L_1$ are invertible and $\sfl(L)\neq 0$, then there is a bifurcation of critical points.
%\end{theorem}
%\noindent
%This theorem has been applied in numerous settings of differential equations. Consequently, it is a long-term aim of our work to show that $\sfl_G(L)\neq 0$ implies the existence of a bifurcation for $G$-invariant functionals, which generalises Theorem \ref{thm:Smoller-Wasserman} and Theorem \ref{SFLbif}.

%%%%%%%%%%%%%%%%%%%%%%%%%%%%%%%%%%%%%%%%%%%%%%%%%%%%%%%%%%%%%%%%%%%%%%%%%%%%%%%%%%%%%%%%%%%%%%%%%%%%%%%%%%%%%%%%%%%%%%%%%%%%%%%%%%%%%%%%%%%%%%%%%%%%%%%%%%%%%%%%%%%%%%%%%%%%%%%%%%%%%%%%%%%%%%%%%%%%%%%%%%%%%%%%%%%%%%%%%%%%%%%%%%%%%%%%%%%%%%%%%%%%%%%%%%%%%%%%%%%%%%%%%%%%%%%%%%%%%%%%%%%%%%%%%%%%%%%%%%%%%%%%%%%%%%%%%%%%%%%%%%%%%%%%%%%%%%%%%%%%%%%%%%%%%%%%%%%%%%%%%%%%%%%%%%%%%%%%%%%%%%%%%%%%%%%%%%%%%%%%%%%%%%%%%%%%%%%%%%%%%%%%%%%%%%%%%%%%%%%%%%%%%%%%%%

\subsection{Bifurcation of Periodic Solutions of Autonomous Hamiltonian Systems}
There are important types of differential equations whose solutions are critical points of a functional $f_\lambda:H\rightarrow\mathbb{R}$ on a Hilbert space $H$ but where the Hessians $L_\lambda$ are in the path component $\mathcal{FS}_\ast(H)$. Thus the Morse index \eqref{Morse} cannot be used for studying bifurcation phenomena for such equations. The most prominent class of this type are Hamiltonian systems and numerous works have appeared over the last decades where approaches were developed to deal with their bifurcation problems in various settings (cf., e.g., \cite{Mawhin}, \cite{Szulkin}, \cite{BartschSzulkin}, \cite{RybickiS1}, \cite{SFLPejsachowiczII}). However, to the best of our knowledge, results in an equivariant setting as in the previous section have not been obtained. It is the main purpose of this paper to show that the $G$-equivariant spectral flow is the right invariant to deal with such bifurcation problems.

\subsubsection{Index Formula and Bifurcation Theorem}
We consider families of autonomous Hamiltonian systems

\begin{equation}\label{HamiltonianNonlin}
\left\{
\begin{aligned}
J u'(t)&+\nabla_u\mathcal{H}(\lambda,u(t))=0,\quad t\in [0,2\pi]\\
u(0)&=u(2\pi),
\end{aligned}
\right.
\end{equation}
where $J$ is the symplectic standard matrix

\begin{align}\label{J}
J=\begin{pmatrix}
0&-I_n\\
I_n&0
\end{pmatrix},
\end{align}
and $\mathcal{H}:I\times\mathbb{R}^{2n}\rightarrow\mathbb{R}$ is of the form 

\[\mathcal{H}(\lambda,u)=\frac{1}{2}\langle S_\lambda u,u\rangle+R(\lambda,u)\]
for a family $S:I\times\mathbb{R}^{2n}\rightarrow\mathbb{R}^{2n}$ of symmetric matrices and a $C^2$-map $R:I\times\mathbb{R}^{2n}\rightarrow\mathbb{R}$ such that

\begin{itemize}
 \item $\nabla_uR(\lambda,u)=o(\|u\|)$ as $u\rightarrow 0$,
 \item there are $r>0$ and $a,b>0$ such that for all $(\lambda,u)\in I\times\mathbb{R}^{2n}$
 
 \[ \|D^2_u R(\lambda,u)\|\leq a+b\|u\|^r.\]
\end{itemize}
Note that the constant function $0$ is a solution of \eqref{HamiltonianNonlin} for all $\lambda\in I$. It is well known that solutions of \eqref{HamiltonianNonlin} are the critical points of the functional $f_\lambda:H^\frac{1}{2}(S^1,\mathbb{R}^{2n})\rightarrow\mathbb{R}$ given by

\begin{align}\label{f}
f_\lambda(u)=\frac{1}{2}\Gamma(u,u)+\int^{2\pi}_0{\mathcal{H}(\lambda,u(t))\,dt},
\end{align}
where $\Gamma$ is a bounded bilinear form on $H^\frac{1}{2}(S^1,\mathbb{R}^{2n})$ such that 

\begin{align}\label{Gamma}
\Gamma(u,v)=\int^{2\pi}_0{\langle Ju'(t),v(t)\rangle\, dt}
\end{align}
if $u\in H^1(S^1,\mathbb{R}^{2n})$ (cf. \cite[\S 1]{SFLPejsachowiczII}). The Hessians $L_\lambda:H^\frac{1}{2}(S^1,\mathbb{R}^{2n})\rightarrow H^\frac{1}{2}(S^1,\mathbb{R}^{2n})$ of $f_\lambda$ at $0$ are Fredholm operators determined by

\begin{align}\label{LHam}
\langle L_\lambda u,v\rangle_{H^\frac{1}{2}}=\Gamma(u,v)+\int^{2\pi}_0{\langle S_\lambda u,v\rangle\, dt},
\end{align}  
and it is readily seen that the kernel of $L_\lambda$ is the space of solutions of 

\begin{equation}\label{Hamiltonianlin}
\left\{
\begin{aligned}
J u'(t)&+S_{\lambda}u(t)=0,\quad t\in [0,2\pi]\\
u(0)&=u(2\pi).
\end{aligned}
\right.
\end{equation}
We now consider two kinds of group actions on $H^\frac{1}{2}(S^1,\mathbb{R}^{2n})$. The first action is obtained from the periodicity. We let the compact Lie group $S^1$ act on $H^\frac{1}{2}(S^1,\mathbb{R}^{2n})$ by setting

\[\Theta u=u(\cdot+\Theta),\]  
and note that each $f_\lambda$ is $S^1$-invariant. It is well-known that the only non-trivial irreducible real representations of $S^1$ are 

\[\begin{pmatrix}
\cos(k\Theta)&-\sin(k\Theta)\\
\sin(k\Theta)&\cos(k\Theta)
\end{pmatrix}\in SO(2),\qquad k\in\mathbb{N},\]
and it is readily seen from this fact that $RO(S^1)$ and $\mathbb{Z}[x]$ are isomorphic as abelian groups by an isomorphism mapping \eqref{sfl-equiv} to a polynomial having the classical spectral flow \eqref{sfl} as zero-order term.\\
Further, we consider a type of group action that was introduced by Bartsch and Willem in \cite{BartschWillem}. Let $G_0$ be a compact Lie group acting orthogonally on $\mathbb{R}^{2n}$ and such that that $g^TJg=J$ and $gS_\lambda=S_\lambda g$ for all $g\in G_0$ and $\lambda\in I$. We obtain an induced orthogonal action on $H^\frac{1}{2}(S^1,\mathbb{R}^{2n})$ by $(gu)(t)=gu(t)$ and each $f_\lambda$ is invariant under this action if $R(\lambda,gu)=R(\lambda,u)$, $\lambda\in I$. In total, we have an orthogonal action of $G:=G_0\times S^1$ on $H^\frac{1}{2}(S^1,\mathbb{R}^{2n})$ such that each $f_\lambda$ is invariant. Consequently, $L=\{L_\lambda\}_{\lambda\in I}$ is a path of $G$-equivariant selfadjoint Fredholm operators and thus $\sfl_G(L)\in RO(G)$ is defined.\\
To state our main theorems, we define symmetric $4n\times 4n$-matrices by 

\begin{align}
A_k(\lambda):=\begin{pmatrix}
\frac{1}{k} S_\lambda&J\\
-J&\frac{1}{k} S_\lambda
\end{pmatrix},\qquad k\in\mathbb{N},
\end{align}
and denote by $E^-(A_k(\lambda))$, $\lambda\in I$, the direct sum of the eigenspaces of the matrix $A_k(\lambda)$ with respect to negative eigenvalues. Moreover, we consider for $k\in\mathbb{N}$ the representations

\begin{align}\label{R4nactionS}
\Theta\mapsto \begin{pmatrix}
\cos(k\Theta)\,I_{2n}&-\sin(k\Theta)\,I_{2n}\\
\sin(k\Theta)\,I_{2n}&\cos(k\Theta)\, I_{2n}
\end{pmatrix}\in SO(4n)
\end{align}
of $S^1$, and 

\begin{align}\label{R4nactionG}
g\mapsto \begin{pmatrix}
g&0\\
0&g
\end{pmatrix}\in O(4n)
\end{align}
of $G_0$ on $\mathbb{R}^{4n}$. Note that $A_k(\lambda)$ is equivariant under these group actions, and thus the direct sums of negative eigenspaces $E^-(A_k(\lambda))\subset\mathbb{R}^{4n}$ are invariant spaces.

\begin{theorem}\label{thm-Hamiltonian}
For all but finitely many $k\in\mathbb{N}$,

\[[E^-(A_k(0))]-[E^-(A_k(1))]=0\in RO(G),\]
and

\begin{align*}
\sfl_{G}(L)=[E^-(S_0)]-[E^-(S_1)]+\sum^{\infty}_{k=1}{([E^-(A_k(0))]-[E^-(A_k(1))])}\in RO(G).
\end{align*}
\end{theorem}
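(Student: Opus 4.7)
The plan is to reduce the problem to finite dimensions via the Fourier decomposition of the Hilbert space. Write
\[
H^{\frac{1}{2}}(S^1,\mathbb{R}^{2n}) = H_0 \oplus \bigoplus_{k\geq 1} H_k,
\]
where $H_0\cong\mathbb{R}^{2n}$ is the subspace of constants and $H_k$ is the $4n$-dimensional subspace spanned by $a\cos(kt)+b\sin(kt)$ with $a,b\in\mathbb{R}^{2n}$. This decomposition is $H^{\frac{1}{2}}$-orthogonal, each $H_k$ is $G$-invariant, and the induced $G$-representations on $H_0$ and $H_k$ agree with those described in \eqref{R4nactionG} and \eqref{R4nactionS} (the $S^1$-part being trivial on $H_0$). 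A routine computation from \eqref{LHam} and \eqref{Gamma} using the orthogonality relations for $\cos(kt),\sin(kt)$ shows that each $H_k$ is $L_\lambda$-invariant, that the quadratic form of $L_\lambda|_{H_0}$ is $2\pi\langle S_\lambda\cdot,\cdot\rangle$, and that the quadratic form of $L_\lambda|_{H_k}$ in the coordinates $(a,b)\in\mathbb{R}^{4n}$ equals $\pi k\cdot A_k(\lambda)$. In particular, $E^-(L_\lambda|_{H_0})\cong E^-(S_\lambda)$ and $E^-(L_\lambda|_{H_k})\cong E^-(A_k(\lambda))$ as $G$-representations.

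I would next establish the vanishing-for-large-$k$ claim. Since $\lambda\mapsto S_\lambda$ is continuous on the compact interval $I$, $\|S_\lambda\|$ is uniformly bounded, so
\[
A_k(\lambda)\;\longrightarrow\; M:=\begin{pmatrix} 0 & J \\ -J & 0\end{pmatrix}\quad\text{uniformly in }\lambda\in I\text{ as }k\to\infty.
\]
The matrix $M$ is symmetric with $M^2=I_{4n}$ and $\tr M=0$, hence its spectrum is $\{\pm 1\}$ with each eigenvalue of multiplicity $2n$. Consequently there exist $N\in\mathbb{N}$ and $\delta>0$ such that for all $k>N$ and all $\lambda\in I$ the matrix $A_k(\lambda)$ is invertible and $\sigma(A_k(\lambda))\cap[-\delta,\delta]=\emptyset$. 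Applying Lemma \ref{lemma-technical} in the finite-dimensional setting to the $G$-equivariant path $\lambda\mapsto A_k(\lambda)$ yields $E^-(A_k(0))\cong E^-(A_k(1))$ as $G$-representations, which is the first assertion of the theorem.

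For the main formula, I would split $H^{\frac{1}{2}}(S^1,\mathbb{R}^{2n}) = V\oplus W$ with $V:=H_0\oplus\bigoplus_{k=1}^N H_k$ finite-dimensional and $W:=\bigoplus_{k>N} H_k$, both subspaces being $G$- and $L_\lambda$-invariant. Lemma \ref{lemma-splitting} then gives
\[
\sfl_G(L) = \sfl_G(L|_V) + \sfl_G(L|_W).
\]
Since $V$ is finite-dimensional, $L|_V\in\mathcal{FS}_+(V)\cap\mathcal{FS}_G(V)$ trivially, and Proposition \ref{prop-sfldiffMorse} combined with the identifications from the first step yields
\[
\sfl_G(L|_V) = [E^-(S_0)]-[E^-(S_1)] + \sum_{k=1}^N\bigl([E^-(A_k(0))]-[E^-(A_k(1))]\bigr),
\]
and extending the sum to $k=\infty$ costs nothing by the first assertion. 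The step I expect to be the main obstacle is showing $\sfl_G(L|_W)=0$ via Lemma \ref{sfl-zero}: one must transfer the uniform spectral gap for the quadratic forms $\pi k\cdot A_k(\lambda)$ on each $H_k$ into a uniform spectral gap for the operators $L_\lambda|_{H_k}$ with respect to the $H^{\frac{1}{2}}$-inner product, and then combine these into a uniform invertibility bound for $L_\lambda|_W$. This amounts to identifying $L_\lambda|_{H_k}$ with $\frac{\pi k}{c_k}A_k(\lambda)$ on $\mathbb{R}^{4n}$, where $c_k>0$ is the $H^{\frac{1}{2}}$-normalisation of the Fourier basis on $H_k$, and observing that $\frac{\pi k}{c_k}$ is bounded away from $0$ uniformly in $k\geq 1$.
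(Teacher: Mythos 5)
Your proposal is correct and follows essentially the same route as the paper: Fourier decomposition into the $G$-invariant reducing subspaces spanned by $\sin(kt)a+\cos(kt)b$, reduction to a finite-dimensional block via Lemma \ref{lemma-splitting} and Lemma \ref{sfl-zero}, and a finite-dimensional computation identifying each block contribution with $[E^-(A_k(0))]-[E^-(A_k(1))]$. The only cosmetic differences are that you invoke Proposition \ref{prop-sfldiffMorse} for the finite-dimensional part (the paper redoes that computation directly with Lemma \ref{lemma-technical}), you justify the uniform invertibility of $A_k(\lambda)$ for large $k$ via the limit matrix (which the paper takes for granted), and you obtain invertibility of $L_\lambda$ on the infinite-dimensional complement from a uniform spectral gap, where the paper simply combines injectivity on each block with the Fredholm property.
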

\noindent
Here $E^-(S_0)$ and $E^-(S_1)$ are the negative eigenspaces of the $2n\times 2n$ matrices $S_{0/1}$, on which $S^1$ acts trivially and $G_0$ by its action on $\mathbb{R}^{2n}$. %Theorem \ref{thm-Hamiltonian} generalises one of the main results of \cite{SFLPejsachowiczII} for the classical spectral flow \eqref{sfl} of the equations \eqref{Hamiltonianlin}.

\begin{theorem}\label{thm-HamiltonianNonlin}
If $G$ is nice, \eqref{Hamiltonianlin} has only the trivial solution for $\lambda=0,1,$ and $$\sfl_G(L)\neq 0\in RO(G),$$ then there is a bifurcation from the trivial branch for \eqref{HamiltonianNonlin}.
\end{theorem}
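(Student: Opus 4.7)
The plan is to reduce the strongly indefinite bifurcation problem to a finite-dimensional $G$-equivariant one with essentially positive reduced Hessians, and then to invoke Corollary~\ref{Cor-SmollerWasserman}.

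I would first exploit the Fourier decomposition of $H:=H^{\frac{1}{2}}(S^1,\mathbb{R}^{2n})$ into $G$-invariant finite-dimensional subspaces $H^{(k)}$, $k\in\mathbb{N}_0$, on which $L_\lambda$ acts (up to the normalisation built into the $H^{\frac{1}{2}}$-inner product) as the matrix $A_k(\lambda)$ of Theorem~\ref{thm-Hamiltonian} for $k\geq 1$, and as $S_\lambda$ on the constants $H^{(0)}$. For $k$ large the perturbation $\frac{1}{k}S_\lambda$ is small, so the eigenvalues of $A_k(\lambda)$ are uniformly close to $\pm 1$. Hence there exists $K\in\mathbb{N}$ such that, setting $V:=\bigoplus_{k\leq K}H^{(k)}$ (a finite-dimensional $G$-subrepresentation) and $W:=V^\perp$, the restriction $D_\lambda:=L_\lambda\mid_W$ is selfadjoint and invertible on $W$ with uniformly bounded inverse on $I$.

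Next, the uniform invertibility of $D_\lambda$ together with the growth hypotheses on $R$ makes the implicit function theorem applicable to the equation $P_W\nabla f_\lambda(v+w)=0$, yielding a $C^1$-map $w:I\times U\to W$ on a $G$-invariant neighbourhood $U$ of $0\in V$, with $w(\lambda,0)=0$ and $D_v w(\lambda,0)=0$; equivariance of $w$ follows from the uniqueness clause of the implicit function theorem combined with $G$-equivariance of the equation. The reduced functional $\tilde f_\lambda(v):=f_\lambda(v+w(\lambda,v))$ is then $G$-invariant on $V$, and its critical points near $0\in V$ correspond bijectively to critical points of $f_\lambda$ near $0\in H$; hence a bifurcation for $\tilde f$ at $0\in V$ forces a bifurcation for \eqref{HamiltonianNonlin}.

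Writing $L_\lambda=\begin{pmatrix}A_\lambda&B_\lambda\\B_\lambda^\ast&D_\lambda\end{pmatrix}$ on $V\oplus W$, a direct computation gives the reduced Hessian $\tilde L_\lambda:=D^2_0\tilde f_\lambda=A_\lambda-B_\lambda D_\lambda^{-1}B_\lambda^\ast$ on $V$. To relate spectral flows I would use the $G$-equivariant congruence homotopy $h(t,\lambda):=U_t(\lambda)L_\lambda U_t(\lambda)^\ast$ where $U_t(\lambda):=\begin{pmatrix}I&-tB_\lambda D_\lambda^{-1}\\0&I\end{pmatrix}$, which interpolates in $\mathcal{FS}_G(H)$ between $h(0,\lambda)=L_\lambda$ and $h(1,\lambda)=\tilde L_\lambda\oplus D_\lambda$; since congruence by an invertible operator preserves invertibility, both side paths $h(\cdot,0)$ and $h(\cdot,1)$ consist of invertible operators (because $L_0,L_1$ are invertible by the assumption that \eqref{Hamiltonianlin} has only the trivial solution at $\lambda=0,1$). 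Proposition~\ref{prop-hominv} combined with Lemma~\ref{sfl-zero} applied to these side paths yields $\sfl_G(L)=\sfl_G(\tilde L\oplus D)$, and a further application of Lemma~\ref{lemma-splitting} and Lemma~\ref{sfl-zero} (the latter to the uniformly invertible $D$) gives $\sfl_G(\tilde L)=\sfl_G(L)\neq 0\in RO(G)$. Since $V$ is finite-dimensional, $\tilde L_\lambda$ lies automatically in $\mathcal{FS}_+(V)\cap\mathcal{FS}_G(V)$ and $\tilde L_0,\tilde L_1$ are invertible; as $G$ is nice, Corollary~\ref{Cor-SmollerWasserman} applied to $\tilde f$ produces the required bifurcation. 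The main technical obstacle I anticipate is ensuring the continuous $(\lambda,t)$-dependence and $G$-equivariance of both the Lyapunov--Schmidt map $w$ and the congruence homotopy; this however reduces to the uniform boundedness of $D_\lambda^{-1}$, the finite rank of $B_\lambda$, and the $G$-equivariance of $L_\lambda$, all of which are already in hand.
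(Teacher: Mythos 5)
Your proposal is correct and follows essentially the same route as the paper: the Fourier-block decomposition $H=X\oplus Y$ with $L_\lambda\mid_Y$ invertible, an equivariant Lyapunov--Schmidt reduction whose equivariance comes from the uniqueness clause of the implicit function theorem, the identity $\sfl_G(\text{reduced Hessians})=\sfl_G(L)$, and finally Corollary~\ref{Cor-SmollerWasserman}. The only cosmetic differences are that, since the blocks $V_k$ reduce $L_\lambda$, the off-diagonal term $B_\lambda$ vanishes identically, so your Schur-complement congruence homotopy degenerates to the constant homotopy and the paper instead simply observes $D_0\eta_\lambda=0$, hence $\ell_\lambda=L_\lambda\mid_X$; and the paper stabilises the reduced functional by adding $\frac{1}{2}\|y\|^2_H$ before invoking Corollary~\ref{Cor-SmollerWasserman}, so as to remain in the infinite-dimensional setting in which that corollary is stated, whereas you apply it directly on the finite-dimensional $V$.
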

\noindent
When we consider a trivial $G_0$ action and apply \eqref{forgetfull} to $\sfl_G(L)$, we obtain from Theorem \ref{thm-Hamiltonian} and Theorem \ref{thm-HamiltonianNonlin} the following corollary, which is Theorem 1.1 of \cite{SFLPejsachowiczII}.

\begin{cor}
If \eqref{Hamiltonianlin} has only the trivial solution for $\lambda=0,1,$ and 

\[\dim(E^-(S_0))-\dim(E^-(S_1))+\sum^{\infty}_{k=1}{(\dim(E^-(A_k(0)))-\dim(E^-(A_k(1))))}\neq 0\in\mathbb{Z},\]
then there is a bifurcation from the trivial branch for \eqref{HamiltonianNonlin}.
\end{cor}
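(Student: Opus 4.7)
The plan is to derive this as a direct corollary of Theorem \ref{thm-Hamiltonian} and Theorem \ref{thm-HamiltonianNonlin} by specialising to a trivial $G_0$-action and then pushing the identity of Theorem \ref{thm-Hamiltonian} through the forgetful homomorphism \eqref{forgetfull}. First, I would take $G_0$ to be trivial, so that $G=G_0\times S^1=S^1$. The hypotheses then reduce to those of the corollary: only the periodicity $S^1$-action is present, which acts on $H^{1/2}(S^1,\mathbb{R}^{2n})$ in any case, and the group $G=S^1$ is nice as noted in the paper.

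Second, I would apply the forgetful ring homomorphism $F:RO(G)\to\mathbb{Z}$ to the identity of Theorem \ref{thm-Hamiltonian}. Since $F([V])=\dim V$ and, by \eqref{forgetfull}, $F(\sfl_G(L))=\sfl(L)$, the formula of Theorem \ref{thm-Hamiltonian} becomes
\[
\sfl(L) \;=\; \dim E^-(S_0)-\dim E^-(S_1)+\sum_{k=1}^{\infty}\bigl(\dim E^-(A_k(0))-\dim E^-(A_k(1))\bigr),
\]
which by the assumption of the corollary is a nonzero integer. Because $F$ is a homomorphism sending $0\in RO(G)$ to $0\in\mathbb{Z}$, the non-vanishing of the right-hand side forces $\sfl_G(L)\neq 0\in RO(G)$.

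Third, I would verify that Theorem \ref{thm-HamiltonianNonlin} applies. The assumption that \eqref{Hamiltonianlin} has only the trivial solution for $\lambda=0,1$ is precisely the statement that $\ker L_0=\ker L_1=\{0\}$, via the identification of $\ker L_\lambda$ with the solution space of \eqref{Hamiltonianlin} recorded right after \eqref{LHam}; since $L_0,L_1$ are selfadjoint Fredholm operators, triviality of their kernels yields invertibility. With $G=S^1$ nice, $L_0,L_1$ invertible and $\sfl_G(L)\neq 0$, Theorem \ref{thm-HamiltonianNonlin} delivers a bifurcation of periodic solutions from the trivial branch for \eqref{HamiltonianNonlin}.

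There is no real obstacle in this argument; the corollary is essentially the statement that non-vanishing of the integer sum detects, through the forgetful map, a non-trivial equivariant spectral flow and thereby triggers the equivariant bifurcation theorem. The only point requiring a brief justification is the translation of the kernel hypothesis on \eqref{Hamiltonianlin} into invertibility of the Hessians $L_0$ and $L_1$, which is already built into the construction of the functional \eqref{f} and the formula \eqref{LHam}.
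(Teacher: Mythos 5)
Your proposal is correct and follows essentially the same route as the paper: specialise to trivial $G_0$ (so $G=S^1$, which is nice), apply the forgetful homomorphism \eqref{forgetfull} to the formula of Theorem \ref{thm-Hamiltonian} to see that the non-vanishing integer sum forces $\sfl_G(L)\neq 0$, and then invoke Theorem \ref{thm-HamiltonianNonlin}. The paper's own justification is exactly this one-line deduction, so there is nothing to add.
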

\noindent
Let us note that the previous corollary was also recently obtained by B\l{}aszczyk, Go\l{}\k{e}biewska and Rybicki in \cite[Thm.5.1]{BlaGoRy} by using the infinite-dimensional Conley index from \cite{Marek}.

\subsubsection{Proof of Theorem \ref{thm-Hamiltonian}}
At first, let us recall that

\[H:=H^\frac{1}{2}(S^1,\mathbb{R}^{2n})=\left\{\sum^\infty_{k=0}{\sin(k\cdot)a_k+\cos(k\cdot)b_k}:\, a_k, b_k\in\mathbb{R}^{2n},\sum^\infty_{k=1}{k(|a_k|^2+|b_k|^2)}<\infty\right\}\]
with the usual scalar product.\\
We consider for $k\geq 0$ the spaces

\[V_k:=\{(\sin kt)a+(\cos kt)b:\, a,b\in\mathbb{R}^{2n}\}\subset H,\]
and note that each $V_k$ is invariant under the action of $G=G_0\times S^1$. Moreover, it follows from \eqref{Gamma} and \eqref{LHam} that each space $V_k$ reduces the operators $L_\lambda$ (cf. \cite[\S 1]{SFLPejsachowiczII}). Let $e_1,\ldots,e_n,e_{n+1},\ldots,e_{2n}$ be the standard basis of $\mathbb{R}^{2n}$. This yields a basis of $V_k$ for $k\in\mathbb{N}$ by 

\begin{align}\label{basis}
\{u^k_1,\ldots,u^k_{2n},v^k_1,\ldots,v^k_{2n}\},
\end{align}
where $u^k_i=\sin(kt)e_i$ and $v^k_i=\cos(kt)e_i$ for $i=1,\ldots,2n$.
As $Je_i=e_{i+n}$ for $i=1,\ldots,n$, it is readily seen from \eqref{Gamma} and \eqref{LHam} that $L_\lambda\mid_{V_K}$, $k\in\mathbb{N}$, is given with respect to the basis \eqref{basis} by the $4n\times 4n$-matrix

\begin{align}\label{Ak}
A_k(\lambda):=\begin{pmatrix}
\frac{1}{k} S_\lambda& J\\
-J&\frac{1}{k} S_\lambda
\end{pmatrix}.
\end{align}
Moreover, $\{e_1,\ldots,e_{2n}\}$ is a basis of $V_0$ and $L_\lambda\mid_{V_0}$ is given by multiplication by $S_\lambda$. The action of $S^1$ on $V_0$ is trivial. A straightforward computation shows that the actions of $S^1$ and $G_0$ on $V_k$, $k\in\mathbb{N}$, are given with respect to the basis \eqref{basis} by \eqref{R4nactionS} and \eqref{R4nactionG}.\\
Our aim is to find a decomposition $H=X\oplus Y$ into closed $G$-invariant subspaces that reduce the operators $L_\lambda$ and are such that $\dim(X)<\infty$, as well as $L_\lambda\mid_{Y}\in GL(Y)$, $\lambda\in I$.\\
Let $m_0\in\mathbb{N}$ be such that $A_k(\lambda)$ is invertible for all $k> m_0$ and $\lambda\in I$. Then the operators $L_\lambda\mid_{V_k}:V_k\rightarrow V_k$ are invertible as well for $k> m_0$. We now consider the spaces $X=\bigoplus^{m_0}_{k=0}V_k$ and $Y=X^\perp$. The operators $L_\lambda$ are reduced by the decomposition $H=X\oplus Y$ and we obtain from Lemma \ref{lemma-splitting}

\[\sfl_G(L)=\sfl_G(L\mid_{X})+\sfl_G(L\mid_{Y}).\]
As $L_\lambda\mid_{Y}\in\mathcal{FS}(Y)$ and $L_\lambda\mid_{V_k}:V_k\rightarrow V_k$ is invertible for $k\geq m_0+1$, it follows that $L_\lambda\mid_{Y}$ is invertible for $\lambda\in I$. Consequently, by Lemma \ref{sfl-zero},

\begin{align}\label{Hamiltonian-finitedimred}
\sfl_{G}(L)=\sfl_{G}(L\mid_{X}),
\end{align} 
and so we have reduced the spectral flow computation to finite dimensions. Moreover, we obtain from Lemma \ref{lemma-splitting} and Lemma \ref{sfl-zero}

\begin{align}\label{hamiltonianproofsfl}
\sfl_{G}(L)=\sum^{m_0}_{k=0}{\sfl_{G}(L\mid_{V_k})}=\sum^{\infty}_{k=0}{\sfl_{G}(L\mid_{V_k})},
\end{align}
where we have used once again that $L_\lambda\mid_{V_k}:V_k\rightarrow V_k$ is invertible for $k> m_0$.\\
Let us now consider $L\mid_{V_k}$ for some $k=0,1,2,\ldots$. As $V_k$ is of finite dimension, there is a single $a>0$ in \eqref{sfl-equiv} such that $\sigma(L_\lambda\mid_{V_k})\subset[-a,a]$ and all elements in $\sigma(L_\lambda\mid_{V_k})$ are eigenvalues of finite multiplicity. By Lemma \ref{lemma-technical}, we obtain

\begin{align*}
0&=[E(L_{1}\mid_{V_k},[-a,a])]-[E(L_{0}\mid_{V_k},[-a,a])]\\
&=[E(L_{1}\mid_{V_k},[0,a])\oplus [E(L_{1}\mid_{V_k},[-a,0))]-[E(L_{0}\mid_{V_k},[0,a])\oplus E(L_{0}\mid_{V_k},[-a,0))]\\
&=([E(L_{1}\mid_{V_k},[0,a])]-[E(L_{0}\mid_{V_k},[0,a])])+([E(L_{1}\mid_{V_k},[-a,0))]-[E(L_{0}\mid_{V_k},[-a,0))]),
\end{align*}
which shows that

\[[E(L_{1}\mid_{V_k},[0,a])]-[E(L_{0}\mid_{V_k},[0,a])]=[E(L_{0}\mid_{V_k},[-a,0))]-[E(L_{1}\mid_{V_k},[-a,0))]\in RO(G).\]
Plugging this into \eqref{sfl-equiv} yields

\begin{align*}
\begin{split}
\sfl_{G}(L\mid_{V_k})&=[E(L_{1}\mid_{V_k},[0,a])]-[E(L_{0}\mid_{V_k},[0,a])]=[E(L_{0}\mid_{V_k},[-a,0))]-[E(L_{1}\mid_{V_k},[-a,0))]\\
&=[E^-(L_{0}\mid_{V_k})]-[E^-(L_{1}\mid_{V_k})],
\end{split}
\end{align*}
where $E^-(L_{i}\mid_{V_k})$, $i=1,2$, denotes the direct sum of the eigenspaces with respect to negative eigenvalues. Consequently, with respect to the basis \eqref{basis},

\begin{align}\label{hamiltonianproofend}
\begin{split}
\sfl_{G}(L\mid_{V_k})&=[E^-(A_k(0))]-[E^-(A_k(1))]\in RO(G),\quad k\in\mathbb{N},\\
\sfl_{G}(L\mid_{V_0})&=[E^-(S_0)]-[E^-(S_1)]\in RO(G),
\end{split}
\end{align}
where $E^-(A)$ is the direct sum of the eigenspaces of a matrix $A$ with respect to negative eigenvalues and we need to consider the group actions as described below \eqref{Ak}.\\
We now see from \eqref{hamiltonianproofend} that $[E^-(A_k(0))]-[E^-(A_k(1))]=0\in RO(G)$ for $k> m_0$ as $\sfl_{G}(L\mid_{V_k})=0$ in this case. This shows the first part of Theorem \ref{thm-Hamiltonian}. Finally, the spectral flow formula is a direct consequence of \eqref{hamiltonianproofsfl} and \eqref{hamiltonianproofend}.

\subsubsection{Proof of Theorem \ref{thm-HamiltonianNonlin}}
The proof of Theorem \ref{thm-HamiltonianNonlin} is based on the following parametrised $G$-equivariant implicit function theorem.

\begin{lemma}\label{lemma-implict}
Let $H$ be a real Hilbert space and $G$ a compact Lie group acting orthogonally on $H$. Let $U\subset H$ be an open invariant subset of $H$ containing $0\in U$ and $f:I\times U\rightarrow\mathbb{R}$ a continuous one-parameter family of $G$-invariant $C^2$-functionals. Let $F(\lambda,u):=(\nabla f_\lambda)(u)$ and assume that $F(\lambda,0)=0$ for all $\lambda\in I$. Suppose that there is an orthogonal decomposition $H=X\oplus Y$, where $X$ is $G$-invariant and of finite dimension, and such that for 

\[F(\lambda,u)=(F_1(\lambda,x,y),F_2(\lambda,x,y))\in X\oplus Y,\quad u=(x,y)\in X\oplus Y,\]
we have that $(D_y F_2)(\lambda,0,0):Y\rightarrow Y$ is invertible for all $\lambda\in I$. Then:

\begin{enumerate}
 \item[(i)] There are an open ball $B_X=B(0,\delta)\subset X$ and a unique continuous family of equivariant $C^1$-maps $\eta:I\times B_X\rightarrow Y$ such that $\eta(\lambda,0)=0$ for all $\lambda\in I$, and 
 
 \begin{align}\label{implicit}
 F_2(\lambda,x,\eta(\lambda,x))=0,\quad (\lambda,x)\in I\times B_X.
 \end{align}
 
 \item[(ii)] Let the family of functionals $\overline{f}:I\times B_X\rightarrow\mathbb{R}$ and the map $\overline{F}:I\times B_X\rightarrow X$ be defined by
 
 \[\overline{f}(\lambda,x)=f(\lambda,x,\eta(\lambda,x)),\qquad \overline{F}(\lambda,x)=F_1(\lambda,x,\eta(\lambda,x)).\]
 Then $\overline{f}$ is a continuous family of $G$-invariant $C^2$-functionals on $B_X$ and $$\nabla\overline{f}(\lambda,x)=\overline{F}(\lambda,x),\qquad (\lambda,x)\in I\times B_X,$$
 which is a $G$-equivariant map.
\end{enumerate}
\end{lemma}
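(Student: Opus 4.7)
The plan is to reduce to the classical (non-equivariant) implicit function theorem, and then to exploit the orthogonality of the $G$-action together with the uniqueness part of that theorem to promote the resulting map $\eta$ and the reduced functional $\overline{f}$ to equivariant and invariant objects respectively.

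For part (i), observe first that $F=\nabla f:I\times U\rightarrow H$ is continuous in $(\lambda,u)$ and $C^1$ in $u$, and that $F_2$ is $G$-equivariant: $F$ itself is equivariant as the gradient of the $G$-invariant $f_\lambda$, and because $X$ and $Y$ are mutually orthogonal $G$-invariant subspaces, the projected components $F_1$ and $F_2$ inherit equivariance. Since $D_yF_2(\lambda,0,0):Y\rightarrow Y$ is invertible for every $\lambda\in I$, the parametrised implicit function theorem applied locally at each $\lambda_0\in I$ produces a unique continuous $\eta_{\lambda_0}$ defined on a product of neighbourhoods of $\lambda_0$ in $I$ and of $0$ in $X$, and of class $C^1$ in the $X$-variable. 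The compactness of $I$ together with local uniqueness on overlaps patches these local solutions into a single continuous family $\eta:I\times B_X\rightarrow Y$ on a uniform ball $B_X=B(0,\delta)$, with $\eta(\lambda,0)=0$ and \eqref{implicit} satisfied.

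The equivariance of $\eta$ is then a consequence of uniqueness: for $g\in G$ and $(\lambda,x)\in I\times B_X$, orthogonality of the $G$-action gives $gx\in B_X$, and the equivariance of $F_2$ yields
\[F_2(\lambda,gx,g\eta(\lambda,x))=gF_2(\lambda,x,\eta(\lambda,x))=0,\]
with $\|g\eta(\lambda,x)\|=\|\eta(\lambda,x)\|$ inside the uniqueness ball in $Y$; hence $\eta(\lambda,gx)=g\eta(\lambda,x)$. Part (ii) is then essentially bookkeeping: invariance of $\overline{f}$ follows from
\[\overline{f}(\lambda,gx)=f(\lambda,gx,g\eta(\lambda,x))=f(\lambda,g(x,\eta(\lambda,x)))=\overline{f}(\lambda,x),\]
while for $h\in X$ the chain rule together with \eqref{implicit} gives
\[\langle\nabla\overline{f}(\lambda,x),h\rangle=\langle F_1(\lambda,x,\eta(\lambda,x)),h\rangle+\langle F_2(\lambda,x,\eta(\lambda,x)),D_x\eta(\lambda,x)h\rangle=\langle\overline{F}(\lambda,x),h\rangle,\]
the second term vanishing on the constraint set. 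Equivariance of $\overline{F}=\nabla\overline{f}$ follows from the $G$-invariance of $\overline{f}$.

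The main technical point is the patching step in (i), where one needs a lower bound on the radius $\delta$ that is uniform in $\lambda\in I$ and compatibility of the locally defined implicit solutions; both follow from compactness of $I$ and the local uniqueness built into the implicit function theorem, but should be handled carefully given that joint regularity in $(\lambda,u)$ is only continuous. Once this is in place, every subsequent step reduces to the observation that the orthogonal $G$-action preserves the ball in which uniqueness holds and commutes with $F_2$, so equivariance propagates automatically from $F$ to $\eta$, to $\overline{f}$, and finally to $\overline{F}$.
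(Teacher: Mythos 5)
Your proposal is correct and follows essentially the same route as the paper: the non-equivariant statement is taken from the parametrised implicit function theorem (the paper simply cites Lemma 4.2 of Pejsachowicz--Waterstraat, where you sketch the patching over the compact interval $I$), and equivariance of $\eta$, $\overline{f}$ and $\overline{F}$ is then extracted exactly as you do, from the equivariance of $F_1,F_2$ (via the equivariant orthogonal projections onto $X$ and $Y$) combined with the uniqueness clause. The chain-rule verification of $\nabla\overline{f}=\overline{F}$ is likewise the standard one contained in the cited non-equivariant lemma.
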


\begin{proof}
The lemma is proved in the non-equivariant case in \cite[Lemma 4.2]{BifJac}. Note that $F_1(\lambda,u)=PF(\lambda,u)$ and $F_2(\lambda,u)=(I_H-P)F(\lambda,u)$, where $P:H\rightarrow H$ is the orthogonal projection onto $X$. As $X$ is invariant and $G$ acts orthogonally, $P$ and $I_H-P$ are equivariant. Clearly, each $F(\lambda,\cdot)$ is equivariant since $f_\lambda$ is invariant. Thus we have for $g\in G$ and $u\in H$

\begin{align*}
F_i(\lambda,gu)=gF_i(\lambda, u),\quad i=1,2,
\end{align*}
i.e., $F_1$ and $F_2$ are families of $G$-equivariant maps. Note that, as $X$ and $Y$ are invariant, $g(x,y)=(gx,gy)$ for $g\in G$ and $(x,y)\in X\oplus Y=H$. As the lemma holds in the non-equivariant case, there is a unique family of $C^1$-maps $\eta:I\times B_X\rightarrow Y$ that satisfies \eqref{implicit}. Now $B_X$ is invariant as $G$ acts orthogonally, and thus

\begin{align*}
F_2(\lambda,gx,\eta(\lambda,gx))&=0,\quad g\in G,\, x\in B_X\\
F_2(\lambda,gx,g \eta(\lambda,x))&=gF_2(\lambda,x,\eta(\lambda,x))=0, \quad g\in G,\, x\in B_X.
\end{align*}
As $\eta$ is unique, this implies $\eta(\lambda,gx)=g \eta(\lambda,x)$ for all $g\in G$ and $x\in B_X$, i.e., each $\eta(\lambda,\cdot)$ is equivariant. Hence (i) is shown. For (ii), just note that $\overline{f}$ is invariant as $\eta$ is equivariant and $f$ is invariant. Thus $\overline{F}(\lambda,\cdot)=\nabla\overline{f}(\lambda,\cdot)$ is equivariant.  
\end{proof}
\noindent
Now recall that we obtained in the proof of Theorem \ref{thm-Hamiltonian} a decomposition $H=X\oplus Y$ into $G$-equivariant subspaces such that the operators $L_\lambda\mid_{Y}:Y\rightarrow Y$ are invertible. Let $P:H\rightarrow H$ be the orthogonal projection onto $X$. If we set 

\[F_{1}(\lambda,x,y):=PF(\lambda,x,y),\qquad F_{2}(\lambda,x,y):=(I_H-P)F(\lambda,x,y),\]
then $D_yF_{2}(\lambda,0,0)=L_\lambda\mid_{Y}$ is invertible. Thus we obtain from Lemma \ref{lemma-implict} a family $\overline{f}:I\times B_{X}\rightarrow\mathbb{R}$ of $G$-invariant $C^2$-functionals on the finite dimensional space $X$. Note that $\overline{f}$ has the same bifurcation points than $f$. As in Lemma \ref{lemma-implict}, we denote by $\overline{F}:I\times B_{X}\rightarrow X$ the family of gradients of $\overline{f}$.\\
Let $\eta_\lambda:B_{X}\rightarrow Y$ be the continuous family of equivariant $C^1$-maps from Lemma \ref{lemma-implict} for the splitting $H=X\oplus Y$. By differentiating \eqref{implicit} implicitly, we obtain

\[D_0\eta_\lambda=-(D_y F_{2}(\lambda,0,0))^{-1} D_x F_{2}(\lambda,0,0)=-(L_\lambda\mid_{Y})^{-1}(I_H-P)L_{\lambda}\mid_X=0,\]
where we use that the operators $L_\lambda$ are reduced by the decomposition $H=X\oplus Y$ and consequently $(I_H-P)L_{\lambda}\mid_X=0$. 
Thus, if $\ell_{\lambda}:=D^2_0\overline{f}_\lambda$, $\lambda\in I$, are the Hessians of $\overline{f}_\lambda$ at the critical point $0\in B_{X}$, then

\begin{align}\label{final0}
\ell_\lambda=PL_\lambda(I_X+D_0\eta_\lambda)=P L_\lambda\mid_X=L_{\lambda}\mid_X.
\end{align}
Finally, it follows from \eqref{Hamiltonian-finitedimred} that 

\begin{align}\label{final1}
\sfl_G(L)=\sfl_G(\ell),
\end{align}
where $\ell:=\{\ell_{\lambda}\}_{\lambda\in I}$. Hence to prove Theorem \ref{thm-HamiltonianNonlin}, it remains to show the existence of a bifurcation for $\overline{f}$. Note that $\ell_0$ and $\ell_1$ are invertible by \eqref{final0}.\\
Let $\tilde{f}:I\times(B_X\times Y)\rightarrow \mathbb{R}$ be defined by 

\[\tilde{f}(\lambda,x,y)=\overline{f}(\lambda,x)+\frac{1}{2}\|y\|^2_H.\]
$\tilde{f}$ is $G$ invariant as the action of $G$ on $H$ is orthogonal. If $\tilde{L}=\{\tilde{L}_\lambda\}_{\lambda\in I}$ denote the Hessians of $\tilde{f}$, then $\tilde{L}_\lambda\in\mathcal{FS}_+(H)$, $\lambda\in I$, and

\begin{align}\label{final2}
\sfl_G(\tilde{L})=\sfl_G(\ell)\in RO(G)
\end{align}
by Lemma \ref{lemma-splitting}. As $\tilde{L}_0$ and $\tilde{L}_1$ are invertible, and $\sfl_G(\tilde{L})\neq 0$ by \eqref{final1} and \eqref{final2}, we obtain from Corollary \ref{Cor-SmollerWasserman} that there is a bifurcation of critical points for $\tilde{f}$. Finally, $\tilde{f}$ and $\overline{f}$ obviously have the same bifurcation points, which shows Theorem \ref{thm-HamiltonianNonlin}.

 \subsubsection*{Acknowledgements}
     
    The authors were supported by the grant BEETHOVEN2
    of the National Science Centre, Poland, no.\ 2016/23/G/ST1/04081. Moreover, the research was funded by the Deutsche Forschungsgemeinschaft (DFG, German Research Foundation) - 459826435. 
    
%%%%%%%%%%%%%%%%%%%%%%%%%%%%%%%%%%%%%%%%%%%%%%%%%%%%%%%%%%%%%%%%%%%%%%%%%%%%%%%%%%%%%%%%%%%%%%%%%%%%%%%%%%%%%%%%%%%%%%%%%%%%%%%%%%%%%%%%%%%%%%%%%%%%%%%%%%%%%%%%%%%%%%%%%%%%%%%%%%%%%%%%%%%%%%%%%%%%%%%%%%%%%%%%%%%%%%%%%%%%%%%%%%%%%%%%%%%%%%%%%%%%%%%%%%%%%%%%%%%%%%%%%%%%%%%%%%%%%%%%%%%%%%%%%%%%%%%%%%%%%%%%%%%%%%%%%%%%%%%%%%%%%%%%%%%%%%%%%%%%%%%%%%%%%%%%%%%%%%%%%%%%%%%%%%%%%%%%%%%%%%%%%%%%%%%%%%%%%%%%%%%%%%%%%%%%%%%%%%%%%%%%

\thebibliography{99}

%\bibitem[Ar67]{Arnold} V.I. Arnold, \textbf{A Characteristic Class Entering in Quantization Conditions}, Func. Ana. Appl. \textbf{1}, 1967, 1--14

\bibitem{AtiyahSinger} M.F. Atiyah, I.M. Singer, \textbf{Index Theory for skew--adjoint Fredholm operators}, Inst. Hautes Etudes Sci. Publ. Math. \textbf{37}, 1969, 5--26 

\bibitem{AtiyahPatodi} M.F. Atiyah, V.K. Patodi, I.M. Singer, \textbf{Spectral Asymmetry and Riemannian Geometry III}, Proc. Cambridge Philos. Soc. \textbf{79}, 1976, 71--99

\bibitem{BartschWillem} T. Bartsch, M. Willem, \textbf{Periodic solutions of nonautonomous Hamiltonian systems with symmetries}, J. Reine Angew. Math. \textbf{451},  1994, 149--159

\bibitem{BartschSzulkin} T. Bartsch, A. Szulkin, M. Willem, \textbf{Morse theory and nonlinear differential equations}, Handbook of global analysis, 41--73, 1211, Elsevier Sci. B. V., Amsterdam,  2008

\bibitem{BaerHarmonic} C. B\"ar, \textbf{Metrics with Harmonic Spinors}, Geom. Funct. Anal. \textbf{6}, 1996, 899-942

\bibitem{BaerSpinors} C. B\"ar, \textbf{Harmonic spinors for twisted Dirac operators},
 Math. Ann. \textbf{309}, 1997, 225--246

\bibitem{BlaGoRy} Z. B\l{}aszczyk, A. Go\l{}\k{e}biewska, S. Rybicki, \textbf{Conley Index in Hilbert Spaces versus the Generalized Topological Degree}, Adv. Differential Equations \textbf{22}, 2017, 963-982 

\bibitem{UnbSpecFlow} B. Boo{ss}-Bavnbek, M. Lesch, J. Phillips, \textbf{Unbounded Fredholm Operators and Spectral Flow}, Canad. J. Math. \textbf{57}, 2005, 225--250

\bibitem{JacoboUniqueness} E. Ciriza, P.M. Fitzpatrick, J. Pejsachowicz, \textbf{Uniqueness of Spectral Flow}, Math. Comp. Mod. \textbf{32}, 2000, 1495--1501

\bibitem{Fang} H. Fang, \textbf{Equivariant spectral flow and a Lefschetz theorem on odd-dimensional Spin manifolds}, Pacific J. Math. \textbf{220}, 2005, 299--312

\bibitem{SFLPejsachowiczI} P.M. Fitzpatrick, J. Pejsachowicz, L. Recht, \textbf{Spectral Flow and Bifurcation of Critical Points of Strongly-Indefinite Functionals-Part I: General Theory}, Journal of Functional Analysis \textbf{162}, 1999, 52--95

\bibitem{SFLPejsachowiczII} P.M. Fitzpatrick, J. Pejsachowicz, L. Recht, \textbf{Spectral Flow and Bifurcation of Critical Points of Strongly-Indefinite Functionals Part II: Bifurcation of Periodic Orbits of Hamiltonian Systems}, J. Differential Equations \textbf{163}, 2000, 18--40

%\bibitem{Mike} P.M. Fitzpatrick, \textbf{A note on the functional calculus for unbounded self-adjoint operators}, J. Fixed Point Theory Appl. \textbf{13}, 2013, 633--640

\bibitem{Floer} A. Floer, \textbf{An instanton-invariant for 3-manifolds}, Comm. Math. Phys. \textbf{118}, 1988, 215--240

\bibitem{GawryRy} J. Gawrycka, S. Rybicki, \textbf{Solutions of systems of elliptic differential equations on circular domains}, Nonlinear Anal.  \textbf{59}, 2004, 1347--1367

\bibitem{Gohberg} I. Gohberg, S. Goldberg, M.A. Kaashoek, \textbf{Classes of linear operators}, Vol. I,
Operator Theory: Advances and Applications \textbf{49}, Birkh\'{a}user Verlag, Basel, 1990

\bibitem{GoleRy} A. Golebiewska, S. Rybicki, \textbf{Global bifurcations of critical orbits of G-invariant strongly indefinite functionals}, Nonlinear Anal. \textbf{74},  2011, 1823--1834

\bibitem{Marek} M. Izydorek, \textbf{A cohomological Conley index in Hilbert spaces and its applications to strongly indefinite problems}, J. Differential Equations \textbf{170}, 2001, 22--50

\bibitem{Kato} T. Kato, \textbf{Perturbation Theory of Linear Operators}, Grundlehren der mathematischen Wissenschaften \textbf{132}, 2nd edition, Springer, 1976

\bibitem{Lesch} M. Lesch, \textbf{The uniqueness of the spectral flow on spaces of unbounded self-adjoint Fredholm operators}, Spectral geometry of manifolds with boundary and decomposition of manifolds, 193--224, Contemp. Math., 366, Amer. Math. Soc., Providence, RI,  2005

\bibitem{Mawhin} J. Mawhin, M. Willem, \textbf{Critical point theory and Hamiltonian systems}, Applied Mathematical Sciences \textbf{74}, Springer-Verlag, New York,  1989

\bibitem{BifJac} J. Pejsachowicz, N. Waterstraat,
\textbf{Bifurcation of critical points for continuous families of $C^2$ functionals of Fredholm type}, J. Fixed Point Theory Appl. \textbf{13}, 2013, 537--560

\bibitem{Phillips} J. Phillips, \textbf{Self-adjoint Fredholm Operators and Spectral Flow}, Canad. Math. Bull. \textbf{39}, 1996, 460--467

%\bibitem{LorchRiesz} F. Riesz, E. R. Lorch, \textbf{The integral representation of unbounded self-adjoint transformations in Hilbert space}, Trans. Amer. Math. Soc. \textbf{39}, 1936, 331--340

%\bibitem[Pe08]{Jacobo} J. Pejsachowicz, \textbf{Bifurcation of Homoclinics of Hamiltonian Systems}, Proc. Amer. Math. Soc. \textbf{136}, 2008, 2055--2065

%\bibitem[RS93]{Robbin-SalamonMAS} J. Robbin, D. Salamon, \textbf{The Maslov index for paths}, Topology \textbf{32}, 1993, 827--844

\bibitem{Robbin-Salamon} J. Robbin, D. Salamon, \textbf{The spectral flow and the {M}aslov index}, Bull. London Math. Soc. {\bf 27}, 1995, 1--33

\bibitem{RybickiS1} S. Rybicki, \textbf{On periodic solutions of autonomous Hamiltonian systems via degree for $S^1$-equivariant gradient maps}, Nonlinear Anal. \textbf{34}, 1998, 537--569

\bibitem{Segal} G. Segal, \textbf{The representation ring of a compact Lie group},
 Inst. Hautes Etudes Sci. Publ. Math.  \textbf{34}, 1968, 113--128
 
\bibitem{SmollerWasserman} J. Smoller, A.G. Wasserman, \textbf{Bifurcation and symmetry-breaking}, Invent. Math. \textbf{100},  1990, 63--95

\bibitem{CompSfl}  M. Starostka, N. Waterstraat, \textbf{On a Comparison Principle and the Uniqueness of Spectral Flow}, accepted for publication in Math. Nachr., 22 pp., arXiv:1910.05183

\bibitem{Szulkin} A. Szulkin, \textbf{Bifurcation for strongly indefinite functionals and a Liapunov type theorem for Hamiltonian systems}, Differential Integral Equations \textbf{7}, 1994, 217--234

\bibitem{Spinors} N.~Waterstraat, \textbf{A remark on the space of metrics having non-trivial harmonic spinors}, J.~Fixed Point Theory Appl. \textbf{13}, 2013, 143--149

\bibitem{CalcVar} N. Waterstraat, \textbf{A family index theorem for periodic Hamiltonian systems and bifurcation}, Calc. Var. Partial Differential Equations  \textbf{52}, 2015, 727--753

%\bibitem{Homoclinics} N. Waterstraat, \textbf{Spectral flow, crossing forms and homoclinics of Hamiltonian systems}, Proc. Lond. Math. Soc. (3) \textbf{111}, 2015, 275--304

\bibitem{Fredholm} N. Waterstraat, \textbf{Fredholm Operators and Spectral Flow}, Rend. Semin. Mat. Univ. Politec. Torino \textbf{75}, 2017, 7--51

\bibitem{Edinburgh} N. Waterstraat, \textbf{Spectral flow and bifurcation for a class of strongly indefinite elliptic systems}, Proc. Roy. Soc. Edinburgh Sect. A \textbf{148},  2018, 1097--1113

\vspace{1cm}
Joanna Janczewska\\
Institute of Applied Mathematics\\
Faculty of Applied Physics and Mathematics\\
Gda\'{n}sk University of Technology\\
Narutowicza 11/12, 80-233 Gda\'{n}sk, Poland\\
joanna.janczewska@pg.edu.pl

\vspace{1cm}
Marek Izydorek\\
Institute of Applied Mathematics\\
Faculty of Applied Physics and Mathematics\\
Gda\'{n}sk University of Technology\\
Narutowicza 11/12, 80-233 Gda\'{n}sk, Poland\\
marek.izydorek@pg.edu.pl

\vspace{1cm}
Nils Waterstraat\\
Institut f\"ur Mathematik\\
Naturwissenschaftliche Fakult\"at II\\
Martin-Luther-Universit\"at Halle-Wittenberg\\
06099 Halle (Saale), Germany\\
nils.waterstraat@mathematik.uni-halle.de

\end{document}